
\documentclass[11pt,a4paper,reqno]{amsart}


\usepackage{amsmath}
\usepackage{amsfonts}
\usepackage{thmtools}
\usepackage{graphicx}
\usepackage{color}
\usepackage{enumerate}
\usepackage{mathrsfs}
\usepackage{ifthen}
\usepackage{caption}

\newcommand{\R}{\mathbb{R}}
\newcommand{\C}{\mathbb{C}}

\newcommand{\F}{\mathscr{F}}
\renewcommand{\S}{\mathscr{S}}
\newcommand{\D}{\mathscr{D}}

\newcommand{\bdry}{\partial}
\renewcommand{\O}{\mathcal{O}}
\renewcommand{\phi}{\varphi}
\renewcommand{\epsilon}{\varepsilon}
\newcommand{\M}{\ensuremath{\mathcal{M}}}

\newcommand{\bal}{\operatorname{Bal}}
\newcommand{\supp}{\operatorname{supp}}
\newcommand{\diam}{\operatorname{diam}}

\newcommand{\logcap}{\operatorname{cap}}
\newcommand{\ie}{\emph{i.e.} }
\newcommand{\cf}{\textrm{cf.\ }}
\newcommand{\eg}{\textrm{e.g.\ }}

\renewcommand{\d}{\ensuremath{\mathrm{d}}}
\newcommand{\dd}{\ensuremath{\,\mathrm{d}}}

\newcommand{\disk}[2]{D\left(#1,#2\right)}
\newcommand{\restr}[2]{{#1}|_{#2}}
\newcommand{\restrx}[2]{{#1}\lfloor{#2}}

\theoremstyle{plain}
\newtheorem{theorem}{Theorem}[section]

\newtheorem{proposition}[theorem]{Proposition}
\newtheorem{corollary}[theorem]{Corollary}
\theoremstyle{definition}
\newtheorem{definition}[theorem]{Definition}
\newtheorem{remark}[theorem]{Remark}

\title{Equilibrium Measures and Partial Balayage}
\subjclass[2010]{Primary 31A05, Secondary 35R35.}
\date{September 20, 2013}
\keywords{Equilibrium problem, partial balayage, obstacle problem.}

\author{Joakim Roos}

\begin{document}

\begin{abstract}
  We consider the equilibrium problem for an external background
  potential in weighted potential theory, and show that for a large
  class of background potentials there is a complementarity
  relationship between the measure solving the weighted equilibrium
  problem---the weighted equilibrium measure---and a certain partial
  balayage measure.
\end{abstract}
\maketitle

\section{Introduction}
\label{sec:introduction}
Solving the equilibrium problem for a given external background
potential, tantamount to finding the so-called \emph{equilibrium
  measure} of the background potential, is a problem that in recent
time has turned out to have various connections to many different
fields, two of which are locating the zeros of
orthogonal polynomials and finding the eigenvalues of random normal
matrices; we refer to
\cite{WiegmannZabrodin:03,TeodorescuEtAl:05,Teodorescu:06,BaloghHarnad:09,
  HedenmalmMakarov:13} for more information. For sake of studying the
type of equilibrium measures that can arise in these problems, Balogh
and Harnad introduced in \cite{BaloghHarnad:09} the class of
superharmonically perturbed Gaussian background potentials of the form
\begin{gather*}
  Q(z) = \alpha |z|^2 + U^\nu(z),
\end{gather*}
where $\alpha > 0$ and is $\nu$ is a finite positive Borel measure
with compact support. Based on some of their results it is clear that
there is a strong connection between the weighted equilibrium measure
arising from such a background potential and a certain \emph{partial
  balayage} operation, a connection which has, to the best of our
knowledge, not yet been treated in detail in the literature. It turns
out that the measure resulting from the partial balayage operation and
the weighted equilibrium measure are in a sense complementary to each
other, as is shown below in Theorem~\ref{thm:wpt-pb-rel}.

This paper is organized as follows: in section~\ref{sec:wpt-and-em} we
review the fundamental notions and results from weighted potential
theory that will be used in the remainder of the article, in
section~\ref{sec:partial-balayage} we define partial balayage in full
generality and state some of the important properties, such as
translational invariance, and in section~\ref{sec:examples-part-i} we
discuss one sample background potential treated in detail in
\cite{BaloghHarnad:09} that inspired the current work, and give a
brief explanation as to from where the idea of the complementarity
relationship between partial balayage and the weighted equilibrium
measure came. The subsequent two sections are then devoted to proving
this complementarity relationship in a bit more general setting:
section~\ref{sec:props-cert-pb-meas} treats existence and certain
properties of the particular partial balayage operation that we shall
employ, while section~\ref{sec:equil-meas-thr-pb} combines the results
from weighted potential theory with those for the partial balayage
operation discussed in section~\ref{sec:props-cert-pb-meas} to prove
the stated complementarity relationship,
Theorem~\ref{thm:wpt-pb-rel}. In the final part of the paper,
section~\ref{sec:examples-part-ii}, we treat the sample case of
superharmonically perturbed Gaussian background potentials in full
detail as an illustration of how Theorem~\ref{thm:wpt-pb-rel} can be
employed.

We will in this paper use the following notation:

\begin{center}
  \begin{tabular}{l l}
    $\hat{\C}$ & Riemann sphere \\
    $\disk{a}{r}$ & Open disk centered at $a \in \C$ with radius $r$
    \\
    $\diam(S)$ & Diameter of the set $S$; $\diam(S) = \sup_{x,y \in S}
    |x - y|$ \\
    $m, s$ & (Planar) Lebesgue measure, arc length measure \\
    $\delta_p$ & Dirac measure at a point $p \in \C$ \\
    $\restrx{\mu}{S}$ & Restriction of the (possibly signed) measure
    $\mu$ to \\
    & the set $S$, extended by zero outside $S$ \\
    $\restr{f}{S}$ & Restriction of the function $f$ to the set $S$ \\
    $U^\mu$ & Logarithmic potential of a (signed Radon) measure $\mu$; \\
    & $U^{\mu}(z) = \int \log |z - \zeta|^{-1} \dd \mu(\zeta)$ \\
    $U^S$ & Logarithmic potential of the Lebesgue measure restricted \\
    & to the set $S$; $U^S \equiv U^{\restrx{m}{S}}$\\
    $\Delta$ & Laplace operator; $\Delta = \partial_x^2
    + \partial_y^2$ with $z = x + i y \in \C$\\
    $I(\mu)$ & Logarithmic energy of $\mu$; \\
    & $I(\mu) = \iint \log |z - \zeta|^{-1} \dd \mu(\zeta) \d \mu(z) =
    \int U^\mu \dd \mu$\\
    $\logcap(S)$ & Logarithmic capacity of the set $S$ \\
    $Q$ & Background potential/external field \\
    $\M_t(E)$ & Set of Borel measures $\mu$ such that $\mu(\C) = t$\\
    & and $\supp \mu \subseteq E$ \\
    $I_{Q,t}(\mu)$ & Weighted energy of a measure $\mu \in \M_t(E)$ \\
    $\mu_{Q,t}$ & Equilibrium measure with total mass $t$ of \\
    & a ($t$-admissible) background potential $Q$ \\
    $F_{Q,t}$ & Modified Robin constant corresponding to $\mu_{Q,t}$ \\
    $\S$ & Set of subharmonic functions in $\C$ \\
    $\S_t$ & Set of functions $V(z) \in \S$ that are harmonic near
    infinity\\
    &  and $V(z) \leq t \log |z| + \O(1)$ for large $|z|$ (with $t >
    0$) \\
    $\bal(\mu, \lambda)$ & Partial balayage of $\mu$ to $\lambda$
  \end{tabular}
\end{center}
Whenever we refer to a \emph{measure} in this article, we always mean
a positive measure. Every measure considered here will at the very
least be a Borel measure, and most of them will be Radon measures
(where, in our setting, Radon simply means that the measure is finite
on every compact set; note that a Radon measure with compact support
by necessity is a finite measure).  We shall often utilize signed
Radon measures, which simply are differences of two positive Radon
measures. For any such signed Radon measure $\sigma$ we can decompose
it as $\sigma = \sigma_+ - \sigma_-$, where we take $(\sigma_+,
\sigma_-)$ to be the Jordan decomposition of $\sigma$, \ie the minimal
decomposition into positive measures such that $\sigma_+ \perp
\sigma_-$. Relations between objects of not necessarily the same type
(for instance an inequality between a function and a measure) will
implicitly always be in the sense of distributions; the same applies
to the Laplace operator, which, when applied to functions not
necessarily twice differentiable, also always will be taken in the
sense of distributions.

\section{Weighted Potential Theory and Equilibrium Measures}
\label{sec:wpt-and-em}

We closely follow \cite{SaffTotik:97}. Let $E \subseteq \C$ be a
subset of the plane; throughout we always assume that $E$ is a
\emph{closed} set. We denote by $\M_t(E)$ the set of (positive) Borel
measures with support in $E$ and total mass $t > 0$; as a minor remark
note that every measure in $\M_t(E)$ by necessity is a Radon measure.

\begin{remark}
  \label{rem:wpt-prob-t-rel}
  The weighted potential theory as described in \cite{SaffTotik:97} is
  stated in terms of probability measures, \ie measures with total
  mass one. As we soon shall see, it will for our purposes be more
  convenient to allow these measures to have an arbitrary (but finite)
  total mass $t$. One can easily translate between the two
  formulations of the theory, since if we let $\mu_{Q,t}$ be the
  equilibrium measure of total mass $t$ for a $t$-admissible
  background potential $Q$ (as defined in
  Definition~\ref{def:t-adm-bkg-pot} and
  Proposition~\ref{prop:def-eq-meas}) and $\hat{\mu}_{\hat{Q}}$ be the
  equilibrium measure of total mass one for the background potential
  $\hat{Q} := Q / t$ (as defined in \cite{SaffTotik:97}), then
  $\mu_{Q,t} = t \hat{\mu}_{\hat{Q}}$.
\end{remark}

\begin{definition}
  \label{def:t-adm-bkg-pot}
  A function $Q : E \to (-\infty, \infty]$ is called a
  \emph{$t$-admissible background potential (on $E$)} for $t > 0$ if
  the following holds:
  \begin{enumerate}[(i)]
  \item $Q$ is lower semicontinuous,
  \item $\logcap(\{ z \in E : Q(z) < \infty \}) > 0$, and
  \item $Q(z) - t \log |z| \to \infty$ as $|z| \to \infty$, $z \in E$
    (if $E$ is unbounded).
  \end{enumerate}
\end{definition}

\begin{remark}
  If the set $E$ in the above definition is bounded (\ie if $E$ is
  compact) we see that requirement (iii) is empty and that the
  parameter $t$ has no influence whatsoever on the admissibility of a
  background potential $Q$. Whenever the domain of $Q$ is compact we
  therefore for ease of notation simply say that $Q$ is an
  \emph{admissible background potential} if it satisfies properties
  (i) and (ii) in Definition~\ref{def:t-adm-bkg-pot}. Moreover, if $Q$
  is a $t$-admissible background potential for every $t > 0$ we say
  that $Q$ is a \emph{fully admissible background potential}.
\end{remark}

\begin{definition}
  Given a $t$-admissible background potential $Q$ on $E$ we let $w(z)
  := e^{-Q(z)}$ and define the \emph{weighted energy} $I_{Q,t}(\mu)$
  of $\mu \in \M_t(E)$ by
  \begin{align*}
    I_{Q,t}(\mu) :=& \frac{1}{t} \iint \log \frac{1}{|z - \zeta|^t w(z)
      w(\zeta)} \dd \mu(\zeta) \dd \mu(z) \\
    =& \int U^\mu \dd \mu + 2 \int Q \dd \mu.
  \end{align*}
\end{definition}

The following existence property, essentially due to Frostman
\cite{Frostman:35}, for a solution to the equilibrium problem of an
external background potential is an important part of weighted
potential theory (see \eg \cite{Ransford:95} or
\cite[Theorem~I.1.3]{SaffTotik:97} for proofs of the case $t=1$, the
relation described in Remark~\ref{rem:wpt-prob-t-rel} easily yields
the proposition for arbitrary values of $t$):

\begin{proposition}
  \label{prop:def-eq-meas}
  Let $Q$ be a $t$-admissible background potential on $E$ for some $t
  > 0$, and define
  \begin{gather*}
    V_{Q,t} := \inf_{\mu \in \M_t(E)} I_{Q,t}(\mu).
  \end{gather*}
  Then $V_{Q,t}$ is finite, and there is a unique measure $\mu_{Q,t}
  \in \M_{t}(E)$ such that $I_{Q,t} (\mu_{Q,t}) = V_{Q,t}$. The
  measure $\mu_{Q,t}$ has compact support, and is called the
  \emph{equilibrium measure} of the background potential $Q$.
\end{proposition}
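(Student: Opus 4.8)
The cleanest route is to reduce everything to the classical case $t=1$ by a scaling argument. Given $\mu \in \M_t(E)$, set $\hat\mu := \mu/t \in \M_1(E)$ and $\hat Q := Q/t$. Since the logarithmic potential scales linearly, $U^\mu = t\,U^{\hat\mu}$, and substituting $\mu = t\hat\mu$ into the definition gives $I_{Q,t}(\mu) = t^2\bigl(\int U^{\hat\mu}\dd\hat\mu + 2\int \hat Q \dd\hat\mu\bigr) = t^2\, I_{\hat Q,1}(\hat\mu)$. I would first check that $\hat Q$ is a $1$-admissible background potential: conditions (i) and (ii) are untouched by the positive scaling, and condition (iii) becomes $\hat Q(z) - \log|z| = t^{-1}\bigl(Q(z) - t\log|z|\bigr) \to \infty$. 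The scaling identity then shows $V_{Q,t} = t^2 V_{\hat Q,1}$ and that $\mu \mapsto \mu/t$ is a bijection $\M_t(E) \to \M_1(E)$ carrying minimizers to minimizers, so finiteness, existence, uniqueness and compactness of support for $\mu_{Q,t}$ follow at once from the corresponding statements for $\hat\mu_{\hat Q}$, with $\mu_{Q,t} = t\,\hat\mu_{\hat Q}$ exactly as in Remark~\ref{rem:wpt-prob-t-rel}.

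The substance therefore lies in the $t=1$ case, which I would establish by the direct method of the calculus of variations. For finiteness, the upper bound $V_{Q,1} < \infty$ comes from condition (ii), which furnishes a compact set $K \subseteq \{Q < \infty\}$ of positive capacity; its classical equilibrium measure has finite logarithmic energy and is supported where $Q$ is bounded, giving a competitor of finite weighted energy. For the lower bound $V_{Q,1} > -\infty$, I would combine lower semicontinuity with the growth condition (iii), using that a bound of the form $U^\mu(z) \geq -\log^+|z| - C$ plays off against $Q(z) - \log|z| \to \infty$ to prevent the functional from diverging to $-\infty$.

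For existence, condition (iii) first lets me show that every near-minimizer may be taken supported in a fixed large disk: mass placed far out is penalized by $Q(z) - \log|z|$ and can be moved inward without increasing the energy, so the infimum is unchanged if one restricts to $\M_1(K_0)$ for a suitable compact $K_0$. A minimizing sequence $\mu_n$ then lives in a fixed compact set, and by weak-$*$ (Helly) compactness a subsequence converges to some $\mu_{Q,1} \in \M_1(E)$, with mass preserved because the limit cannot escape $K_0$ and $E$ is closed. The principle of descent gives lower semicontinuity of the double logarithmic integral under weak-$*$ convergence, while lower semicontinuity of $\mu \mapsto \int Q \dd\mu$ follows from (i); together these yield $I_{Q,1}(\mu_{Q,1}) \leq \liminf I_{Q,1}(\mu_n) = V_{Q,1}$, so $\mu_{Q,1}$ is a minimizer.

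Uniqueness rests on strict convexity of the energy. If $\mu_1, \mu_2$ are two minimizers, then $\sigma := \mu_1 - \mu_2$ has total mass zero and compact support, and a direct expansion gives $I_{Q,1}(\mu_1) + I_{Q,1}(\mu_2) - 2\,I_{Q,1}\bigl(\tfrac{1}{2}(\mu_1+\mu_2)\bigr) = \tfrac{1}{2} I(\sigma)$, the linear term $2\int Q\dd\mu$ cancelling. Since $\tfrac{1}{2}(\mu_1+\mu_2) \in \M_1(E)$ forces the left side to be $\leq 0$, we get $I(\sigma) \leq 0$; combined with the positive-definiteness of the logarithmic kernel on measures of total mass zero, $I(\sigma) \geq 0$ with equality only for $\sigma = 0$, this yields $\mu_1 = \mu_2$. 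The main obstacle I anticipate is the analytic groundwork for the quadratic term: justifying the principle of descent for lower semicontinuity of the energy, and establishing the positive-definiteness of the logarithmic energy on signed measures of mass zero (which requires a regularization argument and care that all energies involved are finite). Once those two facts are secured, the compactness and convexity steps are routine.
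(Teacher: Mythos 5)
Your proposal is correct and takes essentially the same route as the paper: the paper disposes of general $t$ by exactly your scaling reduction $\hat Q := Q/t$, $\mu_{Q,t} = t\,\hat\mu_{\hat Q}$ (Remark~\ref{rem:wpt-prob-t-rel}), and for $t=1$ it simply cites Frostman's theorem (Saff--Totik, Theorem~I.1.3, and Ransford), whose proof in those references is precisely the direct-method argument you sketch---finite energy of a competitor supported on a positive-capacity compact set where $Q$ is bounded, confinement of near-minimizers to a fixed disk via condition (iii), weak-$*$ compactness and the principle of descent, and uniqueness from positive-definiteness of the logarithmic energy on signed measures of total mass zero. The one step to phrase more carefully is your lower bound: the pointwise estimate $U^\mu(z) \geq -\log_+|z| - C$ with a uniform constant fails for measures of unbounded support, and the standard argument instead bounds the symmetric kernel $\log\frac{1}{|z-\zeta|} + Q(z) + Q(\zeta)$ below jointly, using $|z-\zeta| \leq (1+|z|)(1+|\zeta|)$ together with the lower semicontinuity and growth condition (iii) of admissibility.
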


Determining the equilibrium measure $\mu_{Q,t}$ is in applications
often an important problem but also one that might be difficult to
solve; the following result is quite useful:

\begin{proposition}
  \label{prop:char-of-eq-meas}
  Let $Q$ be a $t$-admissible background potential on $E$ for some $t
  > 0$, and assume that $\mu \in \M_t(E)$ has finite logarithmic
  energy (\ie $|I(\mu)| < \infty$) and that $\supp \mu$ is compact. If
  there is a constant $F$ such that
  \begin{align}
    U^\mu + Q &\geq F \qquad \text{q.e. on }
    E, \label{eq:req-1-for-eq-meas} \\
    U^\mu + Q &= F \qquad \text{q.e. on }
    \supp \label{eq:req-2-for-eq-meas} \mu,
  \end{align}
  then $\mu = \mu_{Q,t}$ and $F = F_{Q,t}$, where $F_{Q,t}$ is the
  \emph{modified Robin constant}
  \begin{gather*}
    F_{Q,t} := \frac{1}{t} \left( V_{Q,t} - \int Q \dd \mu_{Q,t}
    \right).
  \end{gather*}
\end{proposition}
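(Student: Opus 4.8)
The plan is to show that the measure $\mu$ actually attains the infimum $V_{Q,t}$, so that by the uniqueness assertion of Proposition~\ref{prop:def-eq-meas} it must coincide with $\mu_{Q,t}$; the value of $F$ then falls out of the same computation. Since $\mu_{Q,t}$ has compact support and finite weighted energy (and $Q$ is bounded below there), it has finite logarithmic energy, so I may compare $I_{Q,t}(\mu)$ directly against $I_{Q,t}(\mu_{Q,t}) = V_{Q,t}$ using only finite-energy measures.

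First I would record the elementary energy identity obtained by setting $\sigma := \mu_{Q,t} - \mu$, a signed Radon measure of total mass zero and finite logarithmic energy. Using the symmetry $\int U^{\mu}\dd\sigma = \int U^\sigma \dd\mu$ of the logarithmic kernel together with bilinearity, the expansion of $\int U^{\mu_{Q,t}}\dd\mu_{Q,t} - \int U^\mu\dd\mu$ equals $2\int U^\mu\dd\sigma + I(\sigma)$, and adjoining the linear contribution $2\int Q\dd\sigma$ gives
\begin{gather*}
  I_{Q,t}(\mu_{Q,t}) - I_{Q,t}(\mu) = 2\int (U^\mu + Q)\dd\sigma + I(\sigma).
\end{gather*}

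Next I would bound each term on the right from below. Because both $\mu$ and $\mu_{Q,t}$ have finite logarithmic energy, neither charges any polar set, so the quasi-everywhere statements \eqref{eq:req-1-for-eq-meas} and \eqref{eq:req-2-for-eq-meas} hold $\mu$- and $\mu_{Q,t}$-almost everywhere. Hence $\int(U^\mu + Q)\dd\mu_{Q,t} \geq F\,\mu_{Q,t}(\C) = Ft$ by the global inequality, while $\int(U^\mu + Q)\dd\mu = F\,\mu(\C) = Ft$ by the equality on $\supp\mu$; subtracting yields $\int(U^\mu + Q)\dd\sigma \geq 0$. For the quadratic term I would invoke the energy principle: a signed measure of total mass zero with finite logarithmic energy satisfies $I(\sigma) \geq 0$, with equality only when $\sigma = 0$. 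It follows that $I_{Q,t}(\mu_{Q,t}) \geq I_{Q,t}(\mu) \geq V_{Q,t} = I_{Q,t}(\mu_{Q,t})$, so equality holds throughout; in particular $I(\sigma) = 0$, whence $\sigma = 0$ and $\mu = \mu_{Q,t}$.

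Finally, to identify the constant I would reuse the relation $\int(U^\mu + Q)\dd\mu = Ft$. Rewriting $\int U^\mu\dd\mu = Ft - \int Q\dd\mu$ and substituting into $V_{Q,t} = I_{Q,t}(\mu) = \int U^\mu\dd\mu + 2\int Q\dd\mu$ gives $V_{Q,t} = Ft + \int Q\dd\mu$, so that $F = \tfrac{1}{t}\bigl(V_{Q,t} - \int Q\dd\mu_{Q,t}\bigr) = F_{Q,t}$. The substantive analytic input, and the only genuine obstacle, is the positivity $I(\sigma) \geq 0$ of the logarithmic energy on mass-zero signed measures; the accompanying care that finite-energy measures ignore polar sets is precisely what licenses integrating the quasi-everywhere hypotheses, and the rest is bookkeeping.
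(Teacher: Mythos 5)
Your argument is correct. Note that the paper does not actually prove Proposition~\ref{prop:char-of-eq-meas} at all --- it defers to Theorem~I.3.3 of Saff--Totik --- so the relevant comparison is with that standard proof, which your proposal essentially reconstructs: the two pillars in both cases are that finite-energy measures charge no polar sets (which converts the quasi-everywhere hypotheses \eqref{eq:req-1-for-eq-meas}--\eqref{eq:req-2-for-eq-meas} into integrable statements) and the strict positivity $I(\sigma)\geq 0$, with equality only for $\sigma=0$, for compactly supported signed measures of total mass zero and finite energy (Lemma~I.1.8 in Saff--Totik). The one genuine variation is structural: the textbook proof shows that $\mu$ minimizes $I_{Q,t}$ over \emph{all} competitors $\nu\in\M_t(E)$ via the identity $I_{Q,t}(\nu)-I_{Q,t}(\mu)=I(\nu-\mu)+2\int(U^\mu+Q)\dd(\nu-\mu)$, whereas you compare $\mu$ against the known minimizer $\mu_{Q,t}$ alone and then invoke the uniqueness clause of Proposition~\ref{prop:def-eq-meas}; this buys you a slightly leaner argument that uses only results the paper has already stated, at the cost of having to verify that $\mu_{Q,t}$ itself has finite logarithmic energy (which you correctly justify: $Q$ is bounded below on the compact support, the logarithmic energy of a compactly supported finite measure is bounded below, and the sum is finite). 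The only points you pass over silently are routine finiteness checks --- that the mutual energy $I(\mu,\mu_{Q,t})$ is finite so the bilinear expansion is legitimate, and that $\int Q\dd\mu$ is finite so the final bookkeeping for $F$ does not hide an $\infty-\infty$ --- but these are standard consequences of the finite-energy and compact-support hypotheses and do not constitute a gap.
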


We refer to \cite[Theorem~I.3.3]{SaffTotik:97} for a proof of the
above. For future use we note the following: if we are given a
$t$-admissible background potential $Q$ on some set $E$ and for some
reason have prior knowledge that the support of the equilibrium
measure $\mu_{Q,t}$ is fully contained in some subset $\hat{E} \subset
E$, then we can just as well switch to studying the equilibrium
problem for the restriction of $Q$ to the set $\hat{E}$ instead; it is
readily verified that $\mu_{\restr{Q}{\hat{E}},t} = \mu_{Q,t}$.

In section~\ref{sec:equil-meas-thr-pb} we are going to utilize the
formulation of the problem of determining the potential of the
equilibrium measure as a classical obstacle problem, for the sake of
showing a relationship between the equilibrium measure and a partial
balayage measure. For this purpose the following result, a slight
reformulation of Theorem~I.4.1 in \cite{SaffTotik:97} better suited
for our purposes, will be useful:

\begin{proposition}
  \label{prop:wpt-upper-envelope}
  Let $t > 0$ be fixed, and define $\S_t$ to be the set of all
  subharmonic functions $V(z)$ on $\C$ that for large $|z|$ are both
  harmonic and such that $V(z) \leq t \log |z| + \O(1)$, \ie $V(z) - t
  \log |z|$ is bounded from above near $\infty$. Then, for a
  $t$-admissible background potential $Q$ on a set $E$, the function
  \begin{gather*}
    F_{Q,t} - U^{\mu_{Q,t}}(z)
  \end{gather*}
  is the upper envelope of the functions $V$ in $\S_t$ satisfying
  $V(z) \leq Q(z)$ for quasi-every $z \in E$.
\end{proposition}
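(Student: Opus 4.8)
The plan is to show that the function $F_{Q,t} - U^{\mu_{Q,t}}$ both belongs to the competing class and dominates every competitor, thereby identifying it as the upper envelope.

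The plan is to show that the function $\phi := F_{Q,t} - U^{\mu_{Q,t}}$ both belongs to the competing class $\{V \in \S_t : V \le Q \text{ q.e. on } E\}$ and dominates every member of it, thereby identifying it as the upper envelope. First I would verify that $\phi \in \S_t$: since $\mu_{Q,t}$ is a positive measure, its logarithmic potential $U^{\mu_{Q,t}}$ is superharmonic, so $-U^{\mu_{Q,t}}$, and hence $\phi$, is subharmonic on $\C$. Because $\mu_{Q,t}$ has compact support (Proposition~\ref{prop:def-eq-meas}), $U^{\mu_{Q,t}}$ is harmonic off that support and satisfies $U^{\mu_{Q,t}}(z) = -t\log|z| + o(1)$ as $|z|\to\infty$; thus $\phi$ is harmonic near infinity and $\phi(z) - t\log|z| \to F_{Q,t}$, giving $\phi \in \S_t$. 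Next, the Frostman variational conditions satisfied by the equilibrium measure --- precisely the relations \eqref{eq:req-1-for-eq-meas}--\eqref{eq:req-2-for-eq-meas} of Proposition~\ref{prop:char-of-eq-meas}, which hold for $\mu_{Q,t}$ with $F = F_{Q,t}$ --- yield $Q \ge F_{Q,t} - U^{\mu_{Q,t}} = \phi$ quasi-everywhere on $E$. Hence $\phi$ is an admissible competitor and lies below the upper envelope.

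For the reverse inequality I would fix an arbitrary $V \in \S_t$ with $V \le Q$ q.e.\ on $E$ and prove $V \le \phi$ everywhere. On $\supp\mu_{Q,t}$ the equality \eqref{eq:req-2-for-eq-meas} gives $\phi = Q$ q.e., whence $V \le Q = \phi$ q.e.\ there. Off the support, put $\Omega := \hat{\C} \setminus \supp\mu_{Q,t}$, an open subset of the sphere containing $\infty$; on $\Omega$ the potential $U^{\mu_{Q,t}}$ is harmonic, so $\phi$ is harmonic and $u := V - \phi$ is subharmonic. Its finite boundary satisfies $\partial\Omega \subseteq \supp\mu_{Q,t}$, where we have just seen $u \le 0$ q.e., so it remains to run the maximum principle for $u$ on $\Omega$. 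The point $\infty$ is handled by working on the sphere: writing the Riesz decomposition $V = \int \log|z-\zeta|\dd\lambda(\zeta) + \text{const}$ with $\lambda := \tfrac{1}{2\pi}\Delta V \ge 0$ compactly supported (as $V$ is harmonic near $\infty$), the growth condition defining $\S_t$ forces $\lambda(\C) \le t$. Comparing leading terms gives $u(z) = (\lambda(\C)-t)\log|z| + \O(1)$ near $\infty$, so either $u(z)\to -\infty$ (if $\lambda(\C)<t$) or $u$ extends harmonically across $\infty$ (if $\lambda(\C)=t$); in both cases $u$ is subharmonic on $\Omega$ including $\infty$ and bounded above there.

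The maximum principle then applies on the compact sphere and yields $u \le \sup_{\partial\Omega} u \le 0$, i.e.\ $V \le \phi$ on $\Omega$, which together with the bound on $\supp\mu_{Q,t}$ gives $V \le \phi$ everywhere. Combined with the first paragraph this identifies $\phi$ with the upper envelope. I expect the delicate part to be making the maximum principle rigorous in the presence of the quasi-everywhere (polar) exceptional sets, since $V \le \phi$ is known only off a polar subset of $\partial\Omega$ while $V$ and $\phi$ are merely semicontinuous there. This is resolved by the extended maximum principle for subharmonic functions bounded above, which permits neglecting polar boundary sets; its interplay with the control at infinity via the mass bound $\lambda(\C) \le t$ is the crux of the argument.
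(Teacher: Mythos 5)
Your overall skeleton (show $\phi := F_{Q,t} - U^{\mu_{Q,t}}$ is itself a competitor, then show it dominates every competitor) is the right one, and the first half is essentially fine: $\phi \in \S_t$ follows as you say, and $\phi \le Q$ q.e.\ on $E$ follows from Frostman's conditions for $\mu_{Q,t}$ --- with the small caveat that Proposition~\ref{prop:char-of-eq-meas} as stated in the paper is only the \emph{sufficiency} direction, so you are really invoking the companion fact (Theorem~I.1.3 in Saff--Totik) that $\mu_{Q,t}$ satisfies \eqref{eq:req-1-for-eq-meas}--\eqref{eq:req-2-for-eq-meas} with $F = F_{Q,t}$. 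Note also that the paper gives no proof of this proposition at all (it cites Theorem~I.4.1 of Saff--Totik), so your attempt has to stand on its own; and in its second half it does not.

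The gap is exactly the point you flag at the end, and it is not ``resolved by the extended maximum principle.'' That principle requires the hypothesis $\limsup_{z \to \zeta,\, z \in \Omega} u(z) \le 0$ for q.e.\ $\zeta \in \partial\Omega$, whereas what you have established is only the \emph{pointwise} inequality $u(\zeta) \le 0$ at q.e.\ $\zeta \in \supp \mu_{Q,t}$, where $u = V - \phi = V + U^{\mu_{Q,t}} - F_{Q,t}$. These are genuinely different: $V$ is upper semicontinuous, but $U^{\mu_{Q,t}}$ is only \emph{lower} semicontinuous, so $u$ can jump \emph{up} in the limit from $\Omega$. For a general finite-energy measure the potential can be $+\infty$ on a dense (polar) subset of the support; then $\limsup_{z\to\zeta,\,z\in\Omega} U^{\mu}(z) = +\infty > U^{\mu}(\zeta)$ at \emph{every} boundary point, a non-polar set, and the limsup hypothesis fails outright even though the pointwise q.e.\ bound holds. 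Nothing in the hypotheses of the proposition (where $Q$ is merely lower semicontinuous and admissible) excludes such behavior of $U^{\mu_{Q,t}}$, so the maximum-principle route cannot be completed as written; bridging pointwise-q.e.\ data to a global inequality is precisely the content of the Principle of Domination, which is a nontrivial theorem. The repair is short and uses your own preparation: since $\mu_{Q,t}$ has finite energy it charges no polar sets, so $V \le \phi$ q.e.\ on $\supp\mu_{Q,t}$ gives $V \le \phi$ $\mu_{Q,t}$-a.e.; writing $V = -U^{\lambda} + c'$ with $\lambda \ge 0$ compactly supported and $\lambda(\C) \le t$ (your Riesz step, which is correct), this reads $U^{\mu_{Q,t}} \le U^{\lambda} + (F_{Q,t} - c')$ $\mu_{Q,t}$-a.e., and Proposition~\ref{prop:princ-of-domination} (applicable since $\lambda(\C) \le t = \mu_{Q,t}(\C)$ and $\mu_{Q,t}$ has finite energy) upgrades this to all of $\C$, i.e.\ $V \le \phi$ everywhere. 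This is also how the cited source, Saff--Totik Theorem~I.4.1, concludes; your asymptotic analysis at infinity is then not needed except to produce the mass bound $\lambda(\C) \le t$.
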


We are also going to need the so called \emph{Principle of Domination}
(we refer to Theorem~II.3.2 in \cite{SaffTotik:97} for a proof):

\begin{proposition}
  \label{prop:princ-of-domination}
  Let $\mu$ and $\nu$ be two positive finite Borel measures with
  compact support on $\C$, and suppose that the total mass of $\nu$
  does not exceed that of $\mu$. Assume further that $\mu$ has finite
  logarithmic energy. If, for some constant $c$, the inequality
  \begin{gather*}
    U^\mu(z) \leq U^\nu(z) + c
  \end{gather*}
  holds $\mu$-almost everywhere, then it holds for all $z \in \C$.
\end{proposition}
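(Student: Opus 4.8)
The plan is to rephrase everything in terms of the single function $u := U^\mu - U^\nu$: the hypothesis then reads $u \le c$ $\mu$-almost everywhere, and the goal is to promote this to $u \le c$ on all of $\C$. The point of working with the difference is that, even though $U^\mu$ and $U^\nu$ are individually only superharmonic, on the open set $\Omega := \C \setminus \supp\mu$ the potential $U^\mu$ is \emph{harmonic} (as $\Delta U^\mu = -2\pi\mu$ vanishes there) while $-U^\nu$ is subharmonic everywhere; hence $u$ is subharmonic on $\Omega$. I would regard $\Omega$ as a domain in the Riemann sphere $\hat{\C}$, noting $\partial\Omega \subseteq \supp\mu$ and $\infty \in \Omega$, and try to deduce $u \le c$ by a maximum-principle argument on $\Omega$ followed by a reduction back to $\supp\mu$.

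First I would pin down the behaviour at infinity. For any compactly supported finite measure $\sigma$ one has $U^\sigma(z) = -\sigma(\C)\log|z| + \O(1/|z|)$ as $|z|\to\infty$, so $u(z) = -(\mu(\C)-\nu(\C))\log|z| + \O(1/|z|)$. Because $\mu(\C) \ge \nu(\C)$, the coefficient of $\log|z|$ is nonpositive and therefore $\limsup_{|z|\to\infty} u(z) \le 0$. This is exactly where the assumption on the total masses is consumed, and it renders $\infty$ an innocuous boundary point: in the maximum principle on $\Omega$ it can contribute at most $0$.

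With the growth at infinity under control, the maximum principle for subharmonic functions on $\Omega$ would give $u \le \max(c,0)$ throughout $\Omega$, \emph{provided} one can bound the boundary values, i.e.\ show $\limsup_{z\to\zeta,\, z\in\Omega} u(z) \le c$ for quasi-every $\zeta \in \partial\Omega \subseteq \supp\mu$. Granting this, one then transfers the bound from $\Omega$ to the points of $\supp\mu$ themselves, using the lower semicontinuity of the potentials, and finally disposes of the equal-mass borderline by a short supplementary argument showing the hypotheses force $c \ge 0$ (so that $\max(c,0)=c$).

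The hard part will be precisely this boundary control: the inequality is assumed only $\mu$-almost everywhere as a statement about the \emph{values} of $u$ on $\supp\mu$, whereas the maximum principle needs a bound on the \emph{upper limits} of $u$ as the support is approached from $\Omega$, and $u$ is not upper semicontinuous, so the two need not coincide. This is the step that genuinely uses the finite-energy hypothesis $|I(\mu)| < \infty$: it guarantees that $\mu$ charges no set of zero capacity and that $U^\mu$ is finite quasi-everywhere and quasi-continuous, so that the exceptional set on which $u > c$ is not merely $\mu$-null but polar, hence removable for the maximum principle (whose boundary hypothesis may fail on a polar set) and negligible for the fine-limit arguments. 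Combining quasi-continuity of $U^\mu$ with the lower semicontinuity of $U^\nu$, one passes from the almost-everywhere bound to the required $\limsup$ bound off an (irregular, polar) exceptional set, which is what makes the maximum-principle skeleton go through.
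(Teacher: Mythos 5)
The paper does not prove this proposition itself---it cites Theorem~II.3.2 of Saff--Totik---so your attempt has to stand on its own, and as written it does not. Your skeleton (subharmonicity of $u = U^\mu - U^\nu$ on $\Omega = \C\setminus\supp\mu$, the mass hypothesis controlling the point at infinity, reduction to boundary control on $\partial\Omega$) is sound, and you correctly identify the boundary control as the crux. But your resolution of that crux rests on a false implication. Finite logarithmic energy of $\mu$ gives: every \emph{polar} set is $\mu$-null. It does \emph{not} give the converse, which is what you invoke when you say ``the exceptional set on which $u > c$ is not merely $\mu$-null but polar.'' The set $N = \{z \in \supp\mu : U^\mu(z) > U^\nu(z) + c\}$ is only known to be $\mu$-null, and a $\mu$-null set can perfectly well be non-polar (for $\mu$ equal to arclength on the unit circle, a positive-capacity Cantor subset of the circle of zero arclength is $\mu$-null but not polar). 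So the bad boundary set cannot be assumed polar, and the extended maximum principle, which tolerates only polar exceptional boundary sets, does not apply. Two further problems compound this: quasi-continuity of $U^\mu$ only gives continuity of the restriction of $U^\mu$ to the complement of an open set $G$ of small capacity, hence controls $U^\mu(z)$ only along approaches avoiding $G$, whereas the maximum principle needs $\limsup_{z\to\zeta,\,z\in\Omega} u(z) \le c$ over \emph{unrestricted} approach (lower semicontinuity of $U^\nu$ does work in your favour; the entire difficulty is an upper-semicontinuity-type bound on $U^\mu$, which neither lsc nor quasi-continuity provides); and the extended maximum principle also requires $u$ bounded above on $\Omega$, which fails in general, since a finite-energy $\mu$ can have $U^\mu$ unbounded near $\supp\mu$.

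These are not removable blemishes; they are the actual content of the theorem. The classical proof repairs exactly this step with two ingredients absent from your sketch: (i) since $I(\mu)<\infty$ forces $U^\mu < \infty$ $\mu$-a.e., Lusin/Egorov yields compact sets $K \subseteq \supp\mu$ with $\mu(\supp\mu\setminus K)$ arbitrarily small, contained in the set where the hypothesis holds pointwise, on which the restriction of $U^{\restrx{\mu}{K}}$ to $K$ is continuous; (ii) Maria's \emph{continuity principle} then upgrades this to continuity of $U^{\restrx{\mu}{K}}$ on all of $\C$, after which your maximum-principle argument runs on $\C\setminus K$ with \emph{no} exceptional set and with boundedness above for free, and one concludes by letting $\mu(K)\uparrow\mu(\C)$. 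Note also the mass bookkeeping this forces: $\restrx{\mu}{K}$ has mass strictly less than $\mu(\C)$, so when $\nu(\C)=\mu(\C)$ the point at infinity is no longer innocuous and the borderline case needs separate treatment; relatedly, your assertion that the hypotheses force $c \ge 0$ in the equal-mass case is true but is essentially a consequence of the theorem itself, and you give no independent argument for it.
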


\section{Partial Balayage}
\label{sec:partial-balayage}

The concept of \emph{balayage} is well-known in classical potential
theory, and is essentially the process of clearing the density of a
measure in a prescribed region and altering the measure on the
boundary of the region in such a way that the potential of the measure
remains unchanged in the exterior of the region in
question. \emph{Partial balayage} is a related concept, in the sense
that we allow some density to remain after the process, while still
demanding that the potential is unchanged in some region. Partial
balayage can be defined in a multitude of (albeit often equivalent)
ways \cite{Gustafsson:04}, and we will in this paper use the following
rather broad definition (see also \cite{GustafssonSakai:94, Sjodin:07,
  GardinerSjodin:09, GardinerSjodin:08}):

\begin{definition}
  \label{def:partial-balayage}
  Let $\mu$ and $\lambda$ be (signed) Radon measures with compact
  supports. Set
  \begin{gather*}
    \F^\mu_{\lambda} := \{ V \in \D'(\C) : V \leq U^\mu \textrm{ in }
    \C, - \frac{1}{2\pi} \Delta V \leq \lambda \textrm{ in } \C \},
  \end{gather*}
  where $\D'(\C)$ denotes the set of distributions on $\C$. The set
  $\F^\mu_\lambda$ is of course highly dependent on the nature of
  $\mu$ and $\lambda$, but if it is nonempty and contains a largest
  element $V^\mu \equiv V^\mu_\lambda := \sup \F^\mu_\lambda$ then we
  define the \emph{partial balayage} of $\mu$ to $\lambda$ to be the
  signed Radon measure
  \begin{gather*}
    \bal(\mu, \lambda) := - \frac{1}{2 \pi} \Delta V^\mu.
  \end{gather*}
\end{definition}

\begin{remark}
  \label{rem:pb-transl-inv}
  Let us just note a sometimes very useful sort of translation
  invariance property of the partial balayage measure. As a
  consequence of the definition it is immediate that if $\mu, \lambda$
  and $\sigma$ are suitable measures for which either $\F^\mu_\lambda$
  or $\F^{\mu + \sigma}_{\lambda + \sigma}$ is nonempty then by
  necessity $\F^{\mu + \sigma}_{\lambda + \sigma} = \F^\mu_\lambda +
  U^\sigma$, and so it follows directly that
  \begin{gather}
    \label{eq:pb-transl-inv-prop} \bal(\mu + \sigma, \lambda + \sigma)
= \bal(\mu, \lambda) + \sigma.
  \end{gather}
  Existence of $\bal(\mu, \lambda)$ hence implies, through
  \eqref{eq:pb-transl-inv-prop}, the existence of any partial balayage
  measure of the form $\bal(\mu + \sigma, \lambda + \sigma)$, and vice
  versa. 
\end{remark}

In this paper we will exclusively work with partial balayage to the
zero measure, \ie we will use partial balayage measures of the form
$\bal(\sigma, 0)$; although somewhat similar, note that such a partial
balayage measure in general will \emph{not} be the same as a classical
balayage measure. Under certain assumptions on the signed Radon
measure $\sigma$ partial balayage measures of this form are quite
well-behaved, as we shall see in
section~\ref{sec:props-cert-pb-meas}.

\section{Examples: Part I}
\label{sec:examples-part-i}

Having discussed the basics of weighted potential theory and partial
balayage let us consider an example before we move on to the main
results. As mentioned in the introduction, the inspiration for this
paper comes from \cite{BaloghHarnad:09}, which studies a class of
superharmonically perturbed Gaussian background potentials of the form
\begin{gather}
  \label{eq:ex-p1-superharm-bkg-pots}
  Q(z) = Q_{\alpha, \nu}(z) = \alpha |z|^2 + U^\nu(z),
\end{gather}
defined on $\C$, where $\alpha > 0$ is a parameter and $\nu$ is a
compactly supported finite positive Borel measure. Any background
potential of this form is readily seen to be fully admissible
(\cite[Proposition~2.4]{BaloghHarnad:09} shows this when $t=1$, and
the argument is easily generalized to arbitrary $t$ using the relation
described in Remark~\ref{rem:wpt-prob-t-rel}). The perhaps simplest
non-trivial background potential of the form
\eqref{eq:ex-p1-superharm-bkg-pots} is obtained by taking $\nu = \beta
\delta_a$, where $\beta > 0$ and $a \in \C$, \ie $Q$ becomes
\begin{gather}
  \label{eq:sample-bkg-pot-form}
  Q(z) = \alpha |z|^2 + \beta \log \frac{1}{|z - a|}.
\end{gather}
This background potential is treated in detail in
\cite{BaloghHarnad:09}, and its equilibrium measure (for $t=1$) is in
\cite[Proposition~3.3]{BaloghHarnad:09} determined completely by the
following (\cf Figure~\ref{fig:ex-p1-fig}):

\begin{proposition}
  \label{prop:ex-p1-bh-prop}
  Let the two radii $R$ and $r$ be defined by
  \begin{gather*}
    R := \sqrt{\frac{1+\beta}{2 \alpha}} \quad \text{and} \quad r :=
    \sqrt{\frac{\beta}{2 \alpha}}.
  \end{gather*}
  The equilibrium measure $\mu_{Q}\ ( = \mu_{Q,1})$ of the background
  potential \eqref{eq:sample-bkg-pot-form} is absolutely continuous
  with respect to the Lebesgue measure with density $2\alpha / \pi$,
  \ie
  \begin{gather*}
    \mu_Q = \frac{2\alpha}{\pi} \restrx{m}{S_Q},
  \end{gather*}
  where the support $S_Q = \supp \mu_Q$ depends on the geometric
  arrangement of the disks $\disk{a}{r}$ and $\disk{0}{R}$ in the
  following way:
  \begin{enumerate}[(i)]
  \item If $\disk{a}{r} \subset \disk{0}{R}$ then
    \begin{gather*}
      S_Q = \overline{\disk{0}{R}} \setminus \disk{a}{r}.
    \end{gather*}
  \item If $\disk{a}{r} \not\subset \disk{0}{R}$ then $\hat{\C}
    \setminus S_Q$ is given by a rational exterior conformal mapping
    of the form
    \begin{gather*}
      f: \hat{\C} \setminus \{\zeta : |\zeta| \leq 1\} \to \hat{\C}
      \setminus S_Q, \quad f(\zeta) = \rho \zeta + u + \frac{v}{\zeta
        - A},
    \end{gather*}
    where the coefficients $\rho \in \R^+$, $0 < |A| < 1$ and $u, v
    \in \C$ of the mapping $f(\zeta)$ are uniquely determined by the
    parameters $\alpha, \beta$ and $a$ of the potential $Q(z)$.
  \end{enumerate}
\end{proposition}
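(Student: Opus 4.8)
The plan is to verify the two characterization conditions of Proposition~\ref{prop:char-of-eq-meas}. That is, I will posit the candidate measure $\mu := \frac{2\alpha}{\pi} \restrx{m}{S_Q}$, compute its potential $U^\mu$, and check that there exists a constant $F$ with $U^\mu + Q \geq F$ quasi-everywhere on $\C$ and $U^\mu + Q = F$ on $S_Q = \supp\mu$. By the uniqueness in Proposition~\ref{prop:char-of-eq-meas}, this identifies $\mu$ with $\mu_Q$ and pins down the support as claimed. The key computational fact is that since $\mu$ has density $\frac{2\alpha}{\pi}$, applying the Laplacian gives $-\frac{1}{2\pi}\Delta U^\mu = \mu = \frac{2\alpha}{\pi}\restrx{m}{S_Q}$, so on the interior of $S_Q$ one has $\Delta U^\mu = -4\alpha$, while $\Delta Q = \Delta(\alpha|z|^2) = 4\alpha$ there (the term $\beta\log\frac{1}{|z-a|}$ being harmonic away from $a$). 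Hence $\Delta(U^\mu + Q) = 0$ in the interior of $S_Q$, consistent with $U^\mu + Q$ being constant there.

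\begin{proof}[Proof sketch]
I treat the two geometric cases separately. In case (i), where $\disk{a}{r} \subset \disk{0}{R}$, I would compute $U^\mu$ explicitly by exploiting the decomposition $\restrx{m}{S_Q} = \restrx{m}{\disk{0}{R}} - \restrx{m}{\disk{a}{r}}$ together with the rotational symmetry of each disk. Recall that the logarithmic potential of the uniform measure on a disk $\disk{c}{s}$ is radial about $c$: outside the disk it equals that of a point mass $\pi s^2 \delta_c$, while inside it is a quadratic function of the distance to $c$. Combining these, the identity $U^\mu + Q = F$ on $S_Q$ reduces to checking that the quadratic and logarithmic terms cancel appropriately, and the specific values $R = \sqrt{(1+\beta)/(2\alpha)}$ and $r = \sqrt{\beta/(2\alpha)}$ are exactly those forcing the total mass to equal $1$ (namely $\frac{2\alpha}{\pi}(\pi R^2 - \pi r^2) = 2\alpha(R^2 - r^2) = 1$) and the potential of the hole $\disk{a}{r}$ to match the singular term $\beta\log\frac{1}{|z-a|}$ outside it. The inequality $U^\mu + Q \geq F$ off $S_Q$ then follows from superharmonicity/subharmonicity considerations on the two complementary regions.
\end{proof}

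In case (ii), where the disks are not nested, the support $S_Q$ is no longer a simple annular region but the complement of the image of a rational conformal map $f$; here the natural approach is to use the quadrature-domain / exterior-conformal-mapping machinery rather than direct integration. The plan is to recognize that the condition $U^\mu + Q = F$ on $S_Q$ with $\mu = \frac{2\alpha}{\pi}\restrx{m}{S_Q}$ is equivalent to a moment (quadrature) condition on the complementary domain $\Omega := \hat{\C}\setminus S_Q$, which forces $\Omega$ to be a quadrature domain for the data encoded by the point mass $\beta\delta_a$; the three-parameter rational ansatz $f(\zeta) = \rho\zeta + u + v/(\zeta - A)$ is precisely the general form of the exterior map of such a quadrature domain with one simple pole. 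I would then show that matching the Schwarz function of $\bdry\Omega$ against the potential data yields a determined system for $\rho, A, u, v$ in terms of $\alpha, \beta, a$, and invoke the existence results of \cite{BaloghHarnad:09} for solvability and uniqueness. The main obstacle is exactly this case~(ii): verifying that the conformal-map ansatz produces a measure satisfying the equilibrium conditions requires the full Schwarz-function argument and the solvability of the resulting nonlinear system for the coefficients, which is considerably more delicate than the explicit symmetric computation available in case~(i).
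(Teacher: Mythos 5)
First, a point of orientation: the paper itself never proves this proposition --- it is imported verbatim from \cite[Proposition~3.3]{BaloghHarnad:09}, and what the paper later supplies (Theorem~\ref{thm:wpt-pb-rel} together with section~\ref{sec:examples-part-ii}) is only a reduction of $\mu_{Q,t}$ to a partial balayage measure, which recovers the density $2\alpha/\pi$ and a support inclusion but not the detailed geometry of $S_Q$. Your direct verification via Proposition~\ref{prop:char-of-eq-meas} is therefore a legitimate, different route, and your case~(i) is essentially sound: with $\mu:=\frac{2\alpha}{\pi}\restrx{m}{S_Q}$ and $\restrx{m}{S_Q}=\restrx{m}{\overline{\disk{0}{R}}}-\restrx{m}{\disk{a}{r}}$, the normalizations $2\alpha(R^2-r^2)=1$ and $2\alpha r^2=\beta$ do make $U^\mu+Q$ constant on $S_Q$. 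However, one step would fail as written: the inequality off $S_Q$ does \emph{not} follow from ``superharmonicity/subharmonicity considerations.'' Inside the hole $\disk{a}{r}$ the function $U^\mu+Q$ is \emph{subharmonic} away from $a$ (its distributional Laplacian there is $4\alpha>0$), and the maximum principle for subharmonic functions yields upper bounds, not the lower bound $U^\mu+Q\ge F$ that Proposition~\ref{prop:char-of-eq-meas} demands. What closes this step is an explicit radial computation: on $\disk{a}{r}$ one finds $U^\mu+Q=\alpha|z-a|^2+\beta\log\frac{1}{|z-a|}+\mathrm{const}$, which is decreasing in $|z-a|$ on $(0,r]$ precisely because $r=\sqrt{\beta/(2\alpha)}$, while outside $\overline{\disk{0}{R}}$ one finds $U^\mu+Q=\alpha|z|^2-(1+\beta)\log|z|$, increasing in $|z|$ on $[R,\infty)$ precisely because $R=\sqrt{(1+\beta)/(2\alpha)}$. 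These monotonicity checks are exactly where the stated radii enter; for a larger hole or smaller outer disk the inequality is false.

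The genuine gap is case~(ii). The machinery you name (quadrature identity for $\Omega=\hat{\C}\setminus S_Q$, Schwarz-function matching, rational exterior map with one simple pole) is the right one --- it is in fact what \cite{BaloghHarnad:09} uses --- but your concluding step, to ``invoke the existence results of \cite{BaloghHarnad:09} for solvability and uniqueness,'' is circular: the statement being proved \emph{is} \cite[Proposition~3.3]{BaloghHarnad:09}. To make case~(ii) a proof you would need to (a) derive the nonlinear system for $(\rho,u,v,A)$ from matching the Cauchy transform/Schwarz function of $\partial\Omega$ against the data $\beta\delta_a$ and the density $2\alpha/\pi$; (b) show the system admits a solution for which $f$ is \emph{univalent} on $\{|\zeta|>1\}$ --- univalence is the delicate point in all such constructions and your sketch never addresses it; and (c) verify the Frostman inequality $U^\mu+Q\ge F$ on all of $\Omega$, which in case~(ii) is a single unbounded domain containing both $a$ and $\infty$, so the radial monotonicity trick of case~(i) is unavailable. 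A non-circular alternative is the program of this paper: section~\ref{sec:examples-part-ii} exhibits a $t$-extension of $Q$, whence Theorem~\ref{thm:wpt-pb-rel} yields $\mu_Q=-\bal\left(\beta\delta_a-\frac{2\alpha}{\pi}\restrx{m}{\overline{\disk{0}{R}}},\,0\right)$; the obstacle-problem definition of partial balayage then takes care of the inequality in (c) automatically, and case~(ii) reduces to identifying this balayage measure as a (complement of a) quadrature domain, for which one can cite structural results external to the proposition, e.g.\ those of \cite{GustafssonSakai:94}, rather than the proposition itself.
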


\begin{center}
  \begin{figure} \centering
    \begin{minipage}[t]{0.35\textwidth} \centering
      \includegraphics[scale=0.6]{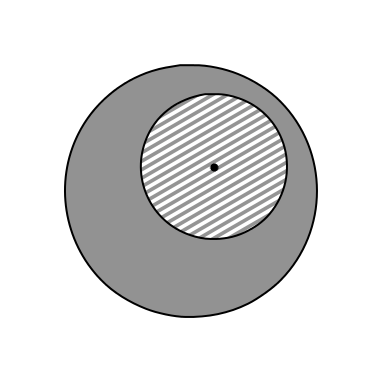}
    \end{minipage}
    \begin{minipage}[t]{0.53\textwidth} \centering
      \includegraphics[scale=0.6]{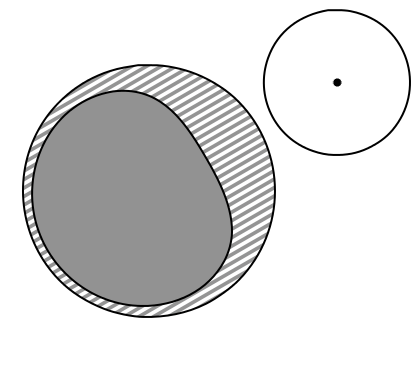}
    \end{minipage}
    \caption{Illustration of the supports of $\mu_Q$ (gray) and
      $\frac{2\alpha}{\pi} \restrx{m}{\overline{\disk{0}{R}}\setminus
        S_Q}$ (stripes) in the cases of the point charge $\beta
      \delta_a$ (black dot) being located inside the disk
      $\overline{\disk{0}{R}}$ (case (i) in
      Proposition~\ref{prop:ex-p1-bh-prop}, on the left) and outside
      $\overline{\disk{0}{R}}$ (case (ii), on the right). (Adaptation
      of Figure~3.1 in \cite{BaloghHarnad:09})}
    \label{fig:ex-p1-fig}
  \end{figure}
\end{center}

One interpretation of the above result that becomes particularly clear
when we look at Figure~\ref{fig:ex-p1-fig} is that the equilibrium
measure $\mu_Q$ effectively is the measure that is ``removed'' from
the measure $\frac{2\alpha}{\pi} \restrx{m}{\overline{\disk{0}{R}}}$
when we try to ``sweep'' the point charge $\beta$ at $a \in \C$ onto
the disk $\overline{\disk{0}{R}}$ (while factoring in the constant
density $2\alpha / \pi$). In terms of the partial balayage defined in
the previous section we can interpret this as
\begin{align*}
  \bal\left(\beta \delta_a, \frac{2\alpha}{\pi}
    \restrx{m}{\overline{\disk{0}{R}}}\right) &= \frac{2\alpha}{\pi}
  \restrx{m}{\overline{\disk{0}{R}} \setminus S_Q} \\
  &= \frac{2\alpha}{\pi} \left( \restrx{m}{\overline{\disk{0}{R}}} -
    \restrx{m}{S_Q} \right)
\end{align*}

As noted in Remark~\ref{rem:pb-transl-inv} we can rewrite this
as
\begin{gather*}
  \bal\left( \beta \delta_a - \frac{2\alpha}{\pi}
    \restrx{m}{\overline{\disk{0}{R}}}, 0 \right) = -
  \frac{2\alpha}{\pi} \restrx{m}{S_Q} = -\mu_Q,
\end{gather*}
or, equivalently,
\begin{gather}
  \label{eq:ex-p1-main-rel}
  \mu_Q + \bal(\sigma, 0) = 0
\end{gather}
where
\begin{gather*}
  \sigma = \beta \delta_a - \frac{2\alpha}{\pi}
  \restrx{m}{\overline{\disk{0}{R}}}. 
\end{gather*}
Of course, to fully justify the above we need to be careful and in
detail verify each step. We omit the precise calculations, and in the
following two sections instead formulate a more general theory of when
relation \eqref{eq:ex-p1-main-rel} holds. As an illustration of how to
use the results we obtain we will then in
section~\ref{sec:examples-part-ii} apply the theory to the entire
class of background potentials of the form
\eqref{eq:ex-p1-superharm-bkg-pots}, hence in particular also to the
background potential \eqref{eq:sample-bkg-pot-form} used above.

\section{Properties of certain Partial Balayage Measures}
\label{sec:props-cert-pb-meas}

As previously mentioned we will exclusively consider partial balayage
of a signed Radon measure to the zero measure, in particular partial
balayage of the form $\bal(\sigma, 0)$ for some signed Radon measure
$\sigma$. For later use we need to determine some of the properties of
this type of partial balayage measure, given certain assumptions on
the signed measure on which we perform the balayage operation. We
require the following:

\begin{theorem}
  \label{thm:pb-properties-with-assumptions}
  Let $\sigma = \sigma_+ - \sigma_-$ be a signed Radon measure with
  compact support for which $\sigma(\C) < 0$ and such that
  $U^{\sigma_-}$ is continuous on $\C$. Then $\bal(\sigma, 0)$ exists,
  has the same total mass as $\sigma$, satisfies
  \begin{gather}
    \label{eq:pb-supp-in-sigma-neg}
    \supp \bal(\sigma, 0) \subseteq \supp \sigma_- ,
  \end{gather}
  and has finite logarithmic energy. Moreover, with $V^\sigma$ as in
  Definition~\ref{def:partial-balayage} for $\nu := \bal(\sigma, 0)$,
  define the sets $\omega := \{z \in \C : V^\sigma(z) < U^\sigma(z)
  \}$ and $\Omega := \C \setminus \supp \nu$. Then $\omega$ is an open
  set and $\omega \subseteq \Omega$, so for every $z \in \supp \nu$ we
  have $V^\sigma(z) = U^\sigma(z)$. Furthermore, there exists a
  constant $c_0$ such that $V^\sigma = U^{\nu} + c_0$.
\end{theorem}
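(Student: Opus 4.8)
The plan is to analyze the obstacle problem implicitly defined by the set $\F^\sigma_0$ and extract the stated properties step by step.

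The plan is to recognise $\bal(\sigma,0)$ as the solution of a classical obstacle problem and to extract every assertion from the complementarity structure of that problem. Since we balayage to the zero measure, the admissible class in Definition~\ref{def:partial-balayage} is $\F^\sigma_0=\{V\in\D'(\C):V\le U^\sigma,\ \Delta V\ge 0\}$, i.e. the subharmonic minorants of the obstacle $U^\sigma=U^{\sigma_+}-U^{\sigma_-}$. First I would check nonemptiness: because $\sigma(\C)<0$ the potential $U^\sigma$ grows like $-\sigma(\C)\log|z|\to+\infty$ at infinity, while on compacta it is bounded below (the $\sigma_+$-part is superharmonic, hence locally bounded below, and $U^{\sigma_-}$ is continuous by hypothesis); thus $U^\sigma$ has a finite global lower bound $m$ and the constant $m$ lies in $\F^\sigma_0$. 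The largest element $V^\sigma$ then exists by the standard Perron/r\'eduite construction (the upper semicontinuous regularisation of $\sup\F^\sigma_0$ is subharmonic, still $\le U^\sigma$ because $U^\sigma$ is lower semicontinuous, and dominates every competitor). This produces $\nu:=\bal(\sigma,0)=-\tfrac1{2\pi}\Delta V^\sigma$; being minus the Riesz measure of a subharmonic function it is a nonpositive Radon measure, in agreement with $\nu(\C)=\sigma(\C)<0$.

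Next I would prove the complementarity statements. The hypothesis that $U^{\sigma_-}$ is continuous is exactly what makes $U^\sigma$ lower semicontinuous, so $V^\sigma-U^\sigma$ is upper semicontinuous and $\omega=\{V^\sigma<U^\sigma\}$ is open. On $\omega$ the largest subharmonic minorant is harmonic --- the fundamental property of the obstacle problem: wherever the solution lies strictly below an lower semicontinuous obstacle it can be raised to its harmonic replacement on a small ball without leaving $\F^\sigma_0$, so strict subharmonicity would contradict maximality. Hence $\Delta V^\sigma=0$ and $\nu=0$ on $\omega$; equivalently $\omega\subseteq\C\setminus\supp\nu=\Omega$, and every $z\in\supp\nu$ lies outside $\omega$, giving $V^\sigma(z)=U^\sigma(z)$.

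For the support inclusion I would localise on $G:=\C\setminus\supp\sigma_-$, where $\sigma=\sigma_+\ge0$, so $U^\sigma$ is superharmonic and $\Delta(V^\sigma-U^\sigma)=-2\pi(\nu-\sigma)\ge0$ (using $\nu\le0\le\sigma$ there); thus $V^\sigma-U^\sigma$ is subharmonic and $\le0$ on $G$. On each connected component the maximum principle forces a dichotomy: either $V^\sigma<U^\sigma$ throughout (the component lies in $\omega$, so $\nu=0$) or $V^\sigma\equiv U^\sigma$ throughout (then $\nu=\sigma_+\ge0$, but $\nu\le0$, so again $\nu=0$). Either way $\nu$ vanishes on $G$, that is $\supp\nu\subseteq\supp\sigma_-$, and in particular $\nu$ is compactly supported with finite total mass.

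Finally the remaining assertions. Since $\Delta V^\sigma=-2\pi\nu$ now has compact support, $V^\sigma-U^\nu$ is entire harmonic and of at most logarithmic growth (from $V^\sigma\le U^\sigma$ and the compactly supported source of $U^\nu$), hence a constant $c_0$ by Liouville: $V^\sigma=U^\nu+c_0$. The total mass is read off from the logarithmic growth of $V^\sigma$ at infinity: $V^\sigma\le U^\sigma$ yields $\nu(\C)\ge\sigma(\C)$, while comparison with an explicit subharmonic competitor of the maximal admissible growth $-\sigma(\C)\log|z|$ (available because $U^\sigma$ is harmonic near infinity and $\sigma(\C)<0$) yields the reverse, so $\nu(\C)=\sigma(\C)$; the Principle of Domination, Proposition~\ref{prop:princ-of-domination}, may be invoked to make this precise. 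For finite logarithmic energy I note that on $\supp\nu$ we have $U^\nu=V^\sigma-c_0$, and $V^\sigma$ is bounded on the compact set $\supp\nu$ (the poles of $U^{\sigma_+}$ are pushed into $\omega$, since there $U^\sigma=+\infty>V^\sigma$, hence carry no mass of $\nu$), so $I(\nu)=\int U^\nu\,\d\nu$ is finite. I expect the main obstacle to be the obstacle-problem regularity itself --- proving rigorously that $V^\sigma$ is harmonic off the contact set, equivalently that $\supp\nu\subseteq\{V^\sigma=U^\sigma\}$ --- since $\omega\subseteq\Omega$, the support localisation, and finiteness of the energy all flow from it; the continuity of $U^{\sigma_-}$ and the growth bookkeeping at infinity are the two points where the hypotheses are truly indispensable.
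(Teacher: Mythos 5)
Your proposal is correct, and for two of the five assertions it takes a genuinely different route from the paper. For the support inclusion \eqref{eq:pb-supp-in-sigma-neg} the paper uses the translation invariance \eqref{eq:pb-transl-inv-prop} to write $\bal(\sigma,0)=\bal(\sigma_+,\sigma_-)-\sigma_-$ and then imports the Gustafsson--Sakai positivity result $\bal(\sigma_+,\sigma_-)\ge 0$, obtaining the two-sided bound $-\sigma_-\le\bal(\sigma,0)\le 0$ of \eqref{eq:pb-ineqs}. You instead run a maximum-principle dichotomy on each component of $G:=\C\setminus\supp\sigma_-$, where $V^\sigma-U^\sigma$ is subharmonic and $\le 0$: either the component lies in $\omega$, where $\nu=0$ by the complementarity you establish first, or $V^\sigma\equiv U^\sigma$ on it, forcing $\nu=\restrx{\sigma_+}{G}\ge 0$ and hence $\nu=0$ since $\nu\le 0$. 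Your route is self-contained (no external citation, no translation-invariance trick), but it buys only the support statement, whereas the paper's route yields the stronger density bound $-\sigma_-\le\nu$, which the paper then exploits for the finite-energy claim by estimating $I(-\nu)\le I(\sigma_-)+\mathrm{const}$ and using continuity of $U^{\sigma_-}$ to get $I(\sigma_-)<\infty$. Lacking the density bound, you prove finite energy differently --- and arguably more simply: having first established $V^\sigma=U^\nu+c_0$, you note that $U^\nu$ is bounded on the compact set $\supp\nu$ (below because the constant competitor gives $V^\sigma\ge M$, above because $V^\sigma$ is upper semicontinuous), so $|I(\nu)|\le\sup_{\supp\nu}|U^\nu|\cdot|\nu|(\C)<\infty$. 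Your ordering is consistent, since your Liouville step precedes the energy step, while the paper's energy argument is independent of the representation $V^\sigma=U^\nu+c_0$.

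Two points deserve tightening. First, in the existence step you claim the regularisation satisfies $V^\sigma_r\le U^\sigma$ ``because $U^\sigma$ is lower semicontinuous''; lower semicontinuity alone does not suffice (a subharmonic function can lie below an lsc obstacle a.e.\ without doing so everywhere). What is actually needed, and what the paper's ``taking means over arbitrarily small balls'' uses, is that ball averages of $U^\sigma=U^{\sigma_+}-U^{\sigma_-}$ converge to its pointwise value as the radius shrinks, which holds because $U^{\sigma_+}$ is superharmonic and $U^{\sigma_-}$ is continuous. Second, your Liouville step is applied to a harmonic function known a priori only to be bounded \emph{above} by $c\log|z|+\O(1)$; this does force constancy (write the function as the real part of an entire $f$ and note $e^f$ has polynomial growth and no zeros), but that is the logarithmic-growth version of Liouville, which you should state --- alternatively, follow the paper's order: use the competitor $t\log_+|z|+\tilde M$ to get $0\le U^\sigma-V^\sigma\le\O(1)$ near infinity, which pins down $\nu(\C)=\sigma(\C)$ first and leaves only the bounded case of Liouville. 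The appeal to the Principle of Domination in your mass argument is unnecessary and not really applicable there; the growth comparison already does the job.
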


\begin{proof}
  Let us first show existence of the partial balayage measure. Since
  $U^{\sigma} = U^{\sigma_+} - U^{\sigma_-}$ and the potential of a
  positive measure with our definition is superharmonic, hence lower
  semicontinuous, it clearly follows from the assumption of continuity
  of $U^{\sigma_-}$ that the potential $U^\sigma$ is lower
  semicontinuous on the whole of $\C$. We assume that $\sigma$ has
  compact support, but since lower semicontinuous functions are
  bounded from below on any compact set we can conclude, if we also
  use the fact that as $|z| \to \infty$ we get
  \begin{gather*}
    U^\sigma(z) = \sigma(\C) \log \frac{1}{|z|} + \O \left(
      \frac{1}{|z|} \right) = \underbrace{(-\sigma(\C))}_{>0} \log |z|
    + \O \left( \frac{1}{|z|} \right) \to \infty,
  \end{gather*}
  that there exists $M \in \R$ such that $M \leq U^\sigma(z)$ for all
  $z \in \C$. If we now let $V(z) := M$ on $\C$ we see that both $V
  \leq U^\sigma$ and $\Delta V \geq 0$ hold on the whole of $\C$
  (since $\Delta V = 0$). The function $V$ is thus a competing element
  for the function $V^\sigma \equiv V^\sigma_0 := \sup \F^\sigma_0$ as
  in Definition~\ref{def:partial-balayage}, \ie the set $\F^\sigma_0$
  is nonempty. If we now can show that $V^\sigma \in \F^\sigma_0$ then
  the existence of $\bal(\sigma, 0)$ follows. That $V^\sigma$ indeed
  belongs to $\F^\sigma_0$ is nontrivial, but can be shown using the
  same argument as in Lemma~1 in \cite{GardinerSjodin:09}: under the
  given assumptions it is relatively easily seen that $\F^\sigma_0$ is
  locally uniformly bounded above, so the upper semicontinuous
  regularization $V^\sigma_r$ of $V^\sigma$ satisfies $\Delta
  V^\sigma_r \geq 0$ and is equal to $V^\sigma$ almost everywhere
  (with respect to the Lebesgue measure), hence everywhere by taking
  means over arbitrary small balls. Since $V^\sigma = \sup
  \F^\sigma_0$ clearly satisfies $V^\sigma \leq U^\sigma$ it now
  follows that $V^\sigma = \sup \F^\sigma_0 = V^\sigma_r \in
  \F^\sigma_0$, and we can conclude that the partial balayage measure
  $\bal(\sigma, 0)$ indeed exists.

  The proposition for the location of the support of the partial
  balayage measure under the given assumptions on $\sigma$, \ie
  property \eqref{eq:pb-supp-in-sigma-neg}, is clearly a corollary
  from the fact that we have the inequalities
  \begin{gather}
    \label{eq:pb-ineqs}
    -\sigma_- \leq \bal(\sigma, 0) \leq 0.
  \end{gather}
  It is clear by the definition of $\bal(\sigma, 0)$ that
  $\bal(\sigma, 0) \leq 0$ holds everywhere. The leftmost inequality
  in \eqref{eq:pb-ineqs} follows from the fact that we can use
  \eqref{eq:pb-transl-inv-prop} to write
  \begin{gather*}
    \bal(\sigma, 0) = \bal(\sigma_+ - \sigma_-, \sigma_ - - \sigma_-)
    = \bal(\sigma_+, \sigma_-) - \sigma_-.
  \end{gather*}
  Although \cite{GustafssonSakai:94} uses a slightly less general
  definition of partial balayage than the one used in this paper,
  applying the same argument as that used in the proof of part (b) of
  Theorem~2.1 in \cite{GustafssonSakai:94} shows that $\bal(\sigma_+,
  \sigma_-) \geq 0$ holds under the given assumptions, and
  \eqref{eq:pb-ineqs} follows.

  As for $\bal(\sigma, 0)$ having finite logarithmic energy, we start
  by making the observation that if $d := \diam(\supp \sigma_-)$, then
  $d > 0$. Indeed, assume this is not the case: then either $\supp
  \sigma_- = \emptyset$ or $\supp \sigma_- = \{a\}$ holds, for some $a
  \in \C$. The first case implies $\sigma_- \equiv 0$, which
  contradicts $\sigma(\C) < 0$, and the second case implies that
  $\sigma_-$ is a Dirac point mass, which contradicts the assumed
  continuity of $U^{\sigma_-}$ on the whole of $\C$. It follows that,
  indeed, $d > 0$.

  Let, for sake of simplicity, $\mu := - \bal(\sigma, 0)$; evidently
  $\bal(\sigma, 0)$ has finite logarithmic energy if and only if $\mu$
  has finite logarithmic energy. We now look at $I(\mu)$ in detail. By
  definition we get
  \begin{align}
    I(\mu) &= \iint \log \frac{1}{|z - \zeta|} \dd \mu(\zeta) \d
    \mu(z) \nonumber \\
    &= \iint \log \left| \frac{d}{z - \zeta} \right| \dd \mu(\zeta) \d
    \mu(z) - \mu(\C)^2 \log d. \label{eq:l-pbfinen-1}
  \end{align}
  For any $z, \zeta \in \supp \mu$ it follows from
  \eqref{eq:pb-supp-in-sigma-neg} that $|z - \zeta| \leq d$, so in
  particular we have $\log \left| d / (z - \zeta) \right| \geq 0$ for
  all $z, \zeta \in \supp \mu$; note that this immediately implies
  $I(\mu) > -\infty$. We see that \eqref{eq:pb-ineqs} yields $\mu \leq
  \sigma_-$, so \eqref{eq:l-pbfinen-1} implies that
  \begin{align*}
    I(\mu) &\leq \iint \log \left| \frac{d}{z-\zeta} \right| \dd
    \sigma_-(\zeta) \d \sigma_-(z) - \mu(\C)^2 \log d \\
    &= \iint \log \frac{1}{|z-\zeta|} \dd \sigma_-(\zeta) \d
    \sigma_-(z) +(\sigma_-(\C)^2- \mu(\C)^2) \log d \\
    &= I(\sigma_-) + (\sigma_-(\C)^2 - \mu(\C)^2) \log d.
  \end{align*}
  The proposed finite logarithmic energy now clearly follows if we can
  show that $I(\sigma_-)$ is finite. However, this is nearly trivial:
  we assume that $U^{\sigma_-}$ is continuous on $\C$ and that $\supp
  \sigma_-$ is compact, from which we obtain
  \begin{gather*}
    |I(\sigma_-)| = \left| \int U^{\sigma_-} \dd \sigma_- \right| \leq
    \max_{z \in \supp \sigma_-} |U^{\sigma_-}(z)| \cdot \sigma_-(\C) <
    \infty.
  \end{gather*}

  Having established the existence of the partial balayage measure,
  the location of its support and it having finite logarithmic energy,
  let us finally turn to proving the last part of the theorem, and, at
  the same time, show that the total mass of $\bal(\sigma, 0)$ is
  precisely $\sigma(\C)$, as stated. That $\omega$ is an open set
  follows from the fact that the auxiliary function $u := U^\sigma -
  V^\sigma$ is a lower semicontinuous function with our assumptions on
  $\sigma$, and that $\omega = u^{-1}((0,\infty])$. To show that
  $\omega \subseteq \Omega$ it is enough to show that $\Delta V^\sigma
  = 0$ whenever $V^\sigma < U^\sigma$, since $\Delta V^\sigma = 0$ by
  the definition of $\bal(\sigma, 0)$ would imply $\nu = 0$ in
  $\omega$. For sake of argument let us simply assume the contrary,
  \ie there is some point where $V^\sigma < U^\sigma$ while $\Delta
  V^\sigma > 0$ holds true. Then there would exist some ball $B$
  contained in $\omega$ in which both $V^\sigma < U^\sigma$ and
  $\Delta V^\sigma > 0$ hold. In that case we could define a new
  function $\hat{V}^\sigma$ by
  \begin{gather*}
    \hat{V}^\sigma(z) := \left\{
      \begin{array}{l l}
        V^\sigma(z) & \text{on } \C \setminus B, \\
        p(z) & \text{on } B,
      \end{array}
    \right.
  \end{gather*}
  where $p$ is the Poisson integral on $B$ with boundary value
  $V^\sigma$ on $\bdry B$. This function would then be at least equal
  to $V^\sigma$ everywhere and in fact be larger than $V^\sigma$ in
  $B$ while still satisfying $\hat{V}^\sigma \leq U^\sigma$ and
  $\Delta \hat{V}^\sigma \geq 0$, hence we would have $\hat{V}^\sigma
  \in \F^\sigma_0$, contradicting the assumed maximality of $V^\sigma
  = \sup \F^\sigma_0$.

  By the definition of $V^\sigma$ it immediately follows that the
  auxiliary function $u = U^\sigma - V^\sigma$ itself can be
  considered as the smallest non-negative function satisfying
  \begin{gather}
    \label{eq:aux-func-ineq}
    -\frac{1}{2\pi} \Delta u \geq \sigma
  \end{gather}
  in $\C$. Let $R > 0$ be large enough so that $\supp \sigma \subset
  D(0, R)$; there evidently exists such an $R$ since the support of
  $\sigma$ is assumed compact. We claim that $u$ then must be harmonic
  in the set $\Theta := \C \setminus \overline{D(0,R)}$.  Indeed,
  \eqref{eq:aux-func-ineq} implies that $\Delta u \leq 0$ on $\Theta$,
  so the minimum principle for superharmonic functions implies that
  $u$ must be constantly equal to zero on the whole of $\Theta$ if it
  is zero at any point. (We in fact utilize that $u$ is assumed to be
  non-negative everywhere; a point in $\Theta$ where $u$ is zero would
  thus be a global minimum for the restriction of $u$ to any bounded
  neighborhood contained in $\Theta$, so that $u$ must be zero
  throughout that neighborhood, and we can cover $\Theta$ by such
  neighborhoods.). If, on the other hand, there is some point $z' \in
  \Theta$ where $u(z') > 0$, then $z' \in \omega =
  u^{-1}((0,\infty])$; the previous part in fact evidently implies that every
  point in $\Theta$ must be contained in $\omega$, \ie $\Theta
  \subseteq \omega$. Since we already know that $\Delta V^\sigma = 0$
  in $\omega$, and moreover also that $\Delta U^\sigma = 0$ outside
  the support of $\sigma$ (hence in $\Theta$), we obtain 
  \begin{gather*}
    \Delta u = \Delta U^\sigma - \Delta V^\sigma = 0
  \end{gather*}
  as desired.

  Having established that the difference $u$ between $U^\sigma$ and
  $V^\sigma$ must be harmonic outside the support of $\sigma$, let us
  now investigate in detail its behavior near infinity. On the one
  hand, we already know that $U^\sigma$ has the expansion
  \begin{gather*}
    U^\sigma(z) = t \log |z| + \O \left( \frac{1}{|z|} \right)
  \end{gather*}
  as $|z| \to \infty$, where $t := -\sigma(\C) > 0$. On the other
  hand, by once more applying that $U^\sigma$ is bounded from below in
  $\C$, we can find a constant $\tilde{M}$ such that the function
  $\tilde{V}$ defined by
  \begin{gather*}
    \tilde{V}(z) := t \log_+ |z| + \tilde{M}
  \end{gather*}
  is both subharmonic and satisfies $\tilde{V} \leq U^\sigma$
  everywhere (we here use the notation $\log_+ x = \max(\log x,
  0)$). It follows that $\tilde{V} \in \F^\sigma_0$, hence $\tilde{V}
  \leq V^\sigma$, and combined with $V^\sigma \leq U^\sigma$ we obtain
  \begin{gather*}
    0 \leq u(z) \leq U^\sigma(z) - t \log_+ |z| - \tilde{M};
  \end{gather*}
  as $|z| \to \infty$ this becomes
  \begin{gather*}
    0 \leq u(z) \leq \O(1).
  \end{gather*}
  We can from this infer that the minimizing function $u$ must be both
  harmonic and bounded on $\Theta$, in particular that $V^\sigma$ must
  have the behavior
  \begin{gather*}
    V^\sigma(z) = t \log |z| + \O(1)
  \end{gather*}
  as $|z| \to \infty$; as $\nu = \bal(\sigma, 0)$ is defined through
  \begin{gather*}
    \nu = \bal(\sigma,0) = - \frac{1}{2\pi} \Delta V^\sigma,
  \end{gather*}
  it follows that the logarithmic term in the expansion of $U^\nu(z)$
  for large $|z|$ must be precisely $t \log |z|$, \ie $\nu(\C) = -t =
  \sigma(\C)$. Lastly, by Weyl's Lemma
  \cite[Lemma~3.7.10]{Ransford:95} it is straightforward to see that
  the difference between $U^\nu$ and $V^\sigma$ is a harmonic function
  on $\C$ that, due to the behaviors of both $U^\nu$ and $V^\sigma$,
  must be $\O(1)$ near infinity, hence bounded, and hence constant, by
  Liouville's theorem. We thus conclude that, indeed, $V^\sigma =
  U^\nu + c_0$ for some constant $c_0$, as stated.
\end{proof}

\section{Equilibrium Measures through Partial Balayage}
\label{sec:equil-meas-thr-pb}

As a motivational example of what will follow, let us combine a few
results from weighted potential theory regarding the potential of an
equilibrium measure, and consider the weighted equilibrium problem
from the point of view as an obstacle problem. Assume $Q: E \to
(-\infty, \infty]$ is a $t$-admissible background potential on a
closed set $E \subseteq \C$ for some $t > 0$. From
Proposition~\ref{prop:char-of-eq-meas}, in particular part
\eqref{eq:req-2-for-eq-meas}, we know that $Q = F_{Q,t} -
U^{\mu_{Q,t}}$ holds q.e. on $S_{Q,t} := \supp \mu_{Q,t}$, but let us
for sake of simplicity for the moment assume that $Q$ is such that the
polar exceptional set in this equality is empty, so that we in fact
have $Q(z) = F_{Q,t} - U^{\mu_{Q,t}}(z)$ for all $z \in S_{Q,t}$ (this
is not an unnatural assumption on $Q$; all of the background
potentials treated in section~\ref{sec:examples-part-ii} all satisfy
this property). We briefly note that an empty such polar exceptional
set by Theorem~I.4.4 in \cite{SaffTotik:97} in fact implies that
$U^{\mu_{Q,t}}$ must be continuous on $\C$.

It is easily seen that if we let $\widehat{Q}$ be the restriction of $Q$
to $S_{Q,t}$ extended by positive infinity outside $S_{Q,t}$, \ie we
let $\widehat{Q}:\C \to (-\infty, \infty]$ be defined by
\begin{gather}
  \label{eq:extending-Q-by-infty}
  \widehat{Q}(z) := \left\{
    \begin{array}{ll}
      Q(z) & z \in S_{Q,t}, \\[.2cm]
      \infty & z \notin S_{Q,t},
    \end{array}
  \right.
\end{gather}
then $\widehat{Q}$ is also a $t$-admissible background potential, and,
more importantly, $\mu_{Q,t} = \mu_{\widehat{Q},t}$. We now look at
the equilibrium problem for $\widehat{Q}$ (which, as just mentioned,
is equivalent to that of $Q$) but now from an obstacle problem point
of view, and for this purpose we let $\S$ denote the set of
subharmonic functions in $\C$, and define $\S_t$ as the set of all
functions $V(z) \in \S$ that are harmonic for large $|z|$ and such
that $V(z) - t \log |z|$ is bounded from above as $|z| \to \infty$
(\cf Proposition~\ref{prop:wpt-upper-envelope}); the obstacle problem
for $\widehat{Q}$ is then to find the largest function
$V_{\widehat{Q}}$ in the class $\S_t$ that is majorized by
$\widehat{Q}$ quasi everywhere, \ie
\begin{gather}
  \label{eq:Qhat-obstacle-problem}
  V_{\widehat{Q}} = \sup\{V \in \S_t : V(z) \leq \widehat{Q}(z) \text{
    for q.e. } z \in \C\}.
\end{gather}
The reason for studying the equilibrium problem as an obstacle problem
is in part because of the fact that we want to establish a
relationship between partial balayage and the weighted equilibrium
measure, and the definition of partial balayage used in this paper is
essentially as an obstacle problem, but also in part because of the
fact that the obstacle problem formulation allows for some flexibility
in the types of obstacles allowed that does not appear in the usual
setting of weighted potential theory. In particular, we shall consider
the obstacle problem with the obstacle $\widetilde{Q} := F_{Q,t} -
U^{\mu_{Q,t}}$, now defined on the whole complex plane instead of
simply a subset of it: this will result in a solution
$V_{\widetilde{Q}}$ defined analogously to
\eqref{eq:Qhat-obstacle-problem}, and, as we will soon see, it will
turn out that the two obstacle problems for $\widehat{Q}$ and
$\widetilde{Q}$ in fact have the same solutions, \ie $V_{\widehat{Q}}
= V_{\widetilde{Q}}$. However, to specifically give an example of the
flexibility in the obstacle problem formulation, note that
$\widehat{Q}$ is a $t$-admissible background potential while
$\widetilde{Q}$ is not: as $|z|$ tends to infinity we have
$\widetilde{Q} = t \log |z| + \O(1)$, which violates admissibility
requirement (iii) in Definition~\ref{def:t-adm-bkg-pot}.

From our point of view it is the property that $\widehat{Q}$ and
$\widetilde{Q}$ as obstacles have the same solution that is
interesting. From the definitions above it is easily seen that
$\widetilde{Q}$ satisfies
\begin{align*}
  \widetilde{Q}(z) &= \widehat{Q}(z) \text{ for all } z \in
  S_{Q,t}, \\
  \widetilde{Q}(z) &\leq \widehat{Q}(z) \text{ for all } z \in \C.
\end{align*}
Combined with the previously mentioned behavior of $\widetilde{Q}$
near infinity, as well as the above mentioned property that
$U^{\mu_{Q,t}}$ is continuous on $\C$, and the (trivial) fact that
$\supp \mu_{Q,t} \subseteq S_{Q,t}$, is it relatively easily seen that
$V_{\widetilde{Q}} = V_{\widehat{Q}}$ then must hold. In fact, we can
in the above in general replace the set $S_{Q,t}$ with some compact
set $E'$, the constant $F_{Q,t}$ with some constant $c$, and the
measure $\mu_{Q,t}$ with some signed and compactly supported measure
$\sigma = \sigma_+ - \sigma_-$ as long as $\sigma(\C) = -t$,
$U^{\sigma_-}$ is continuous on $\C$, and $\supp \sigma_- \subseteq
E'$; if it then holds that the function $\widetilde{Q} := c +
U^\sigma$ satisfies
\begin{align*}
  \widetilde{Q}(z) &= \widehat{Q}(z) \text{ for q.e. } z \in E', \\
  \widetilde{Q}(z) &\leq \widehat{Q}(z) \text{ for q.e. } z \in \C,
\end{align*}
then, as we soon shall see, we will still obtain $V_{\widehat{Q}} =
V_{\widetilde{Q}}$. With this in mind, we introduce the following:

\begin{definition}
  Let $Q$ be a $t$-admissible background potential on $E \subseteq \C$
  for some $t > 0$. If $\mathbf{G} = (E', \sigma, c)$ is a triple with
  $E' \subseteq E$ a compact set, $\sigma$ a signed and compactly
  supported Radon measure with $\sigma(\C) = -t$ and $\supp \sigma_-
  \subseteq E'$, and $c \in \R$ a constant, such that the function
  $\widetilde{Q}$ defined by $\widetilde{Q}(z) := c + U^\sigma(z)$
  satisfies
  \begin{align*}
    \widetilde{Q}(z) &= Q(z) \quad \text{for q.e. } z \in E', \\
    \widetilde{Q}(z) &\leq Q(z) \quad \text{for q.e. } z \in E,
  \end{align*}
  then we say that $\mathbf{G}$ \emph{defines a $t$-extension of $Q$
    (relative to $E'$)}. Whenever it is clear from context precisely
  which $\mathbf{G}$ we are working with, we simply refer to the
  generated function $\widetilde{Q} = c + U^\sigma$ as a $t$-extension
  of $Q$.
\end{definition}

\begin{remark}
  \label{rem:exist-and-uniq-for-t-ext} From the above definition it is
  natural to ask questions of existence and uniqueness of a
  $t$-defining extension $\mathbf{G}$ for a given $t$-admissible
  background potential $Q$. As for existence, it is evident from the
  motivational example given above that $(\supp \mu_{Q,t}, -\mu_{Q,t},
  F_{Q,t})$ always defines a $t$-extension of $Q$. However, extensions
  for which $\sigma = \mu_{Q,t}$ (and $c = F_{Q,t}$ by necessity) are
  in some sense not particularly interesting: the $t$-extension
  $\widetilde{Q} = F_{Q,t} - U^{\mu_{Q,t}}$ resulting from such a
  triple is, by Proposition~\ref{prop:wpt-upper-envelope}, in fact
  \emph{itself} the solution to the obstacle problem with
  $\widetilde{Q}$ as an obstacle, \ie $V_{\widetilde{Q}} =
  \widetilde{Q}$. In order to actually have something to work with we
  are more interested in trying to find a $t$-extension
  $\widetilde{Q}$ defined by some $\mathbf{G}$ which can be
  constructed a priori and for which 
  the solution $V_{\widetilde{Q}}$ to the obstacle problem is not
  necessarily equal to $\widetilde{Q}$ everywhere. In other words, the
  interesting case for our purposes is precisely when we can find a
  $t$-extension $\widetilde{Q} = c + U^\sigma$ where $\sigma$ is
  different from $\mu_{Q,t}$. For an arbitrary $Q$ we cannot in
  general guarantee that there exists a $t$-extension for which
  $\sigma \neq \mu_{Q,t}$, and even when such an extension exists it
  may for some background potentials be the case that $\sigma$ is very
  difficult to calculate explicitly. On the other hand, in some
  settings it is relatively easy to find such a measure, and in the
  next section we will show one method that for instance works for the
  class of superharmonically perturbed Gaussian background
  potentials. However, that method will be based on us essentially
  attempting to create such a measure $\sigma$ from applying the
  Laplace operator (in the distributional sense) to $Q$. Applying this
  on a more general background potential need not always be fruitful,
  since $\Delta Q$ need not always be a measure; for instance, the
  background potential $Q$ could have jump discontinuities, so that
  $\Delta Q$ becomes a distribution of non-zero order.
\end{remark}

\begin{remark}
  \label{rem:E-subset-complex-plane}
  Just like we extended the background potential by infinity in our
  motivational example in \eqref{eq:extending-Q-by-infty}, we can
  always perform the same sort of extension on any general background
  potential $Q$. Thus, we may just as well think of the set $E
  \subseteq \C$ on which $Q$ is defined to be the entire complex
  plane, by simply letting $Q(z) := \infty$ for any $z \notin E$.
\end{remark}

\noindent Having defined the notion of a $t$-extension, we are now
ready to state a complementarity theorem between weighted equilibrium
measures and partial balayage.

\begin{theorem}
  \label{thm:wpt-pb-rel}
  Let $Q$ be a $t$-admissible background potential on $E \subseteq
  \C$, and assume $\mathbf{G} = (E', \sigma, c)$ defines a
  $t$-extension $\widetilde{Q} = c + U^\sigma$ of $Q$ relative to
  $E'$. If $U^{\sigma_-}$ is continuous on $\C$, then
  \begin{gather}
    \label{eq:main-eq-meas-pb-relation}
    \mu_{Q,t} + \bal(\sigma, 0) = 0.
  \end{gather}
\end{theorem}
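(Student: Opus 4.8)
The plan is to identify the measure $\mu := -\bal(\sigma, 0)$ as the equilibrium measure $\mu_{Q,t}$ by verifying the hypotheses of the characterization in Proposition~\ref{prop:char-of-eq-meas}. Since $\sigma(\C) = -t < 0$ and $U^{\sigma_-}$ is continuous on $\C$, Theorem~\ref{thm:pb-properties-with-assumptions} applies: writing $\nu := \bal(\sigma, 0)$, it yields existence of $\nu$, the total mass $\nu(\C) = \sigma(\C) = -t$, the support bound $\supp \nu \subseteq \supp \sigma_-$, finite logarithmic energy, and a constant $c_0$ with $V^\sigma = U^\nu + c_0$. Consequently $\mu = -\nu \geq 0$ (using $\bal(\sigma,0) \leq 0$ from the theorem), it has total mass $t$, compact support contained in $\supp \sigma_- \subseteq E' \subseteq E$, and finite logarithmic energy. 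This already secures every structural hypothesis on the candidate measure demanded by Proposition~\ref{prop:char-of-eq-meas}, so it remains only to produce the constant $F$ and verify the two variational relations.

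For the variational relations I would translate the conclusions of Theorem~\ref{thm:pb-properties-with-assumptions} into statements about $U^\mu$. Since $\nu = -\mu$ we have $U^\nu = -U^\mu$, so $V^\sigma = U^\nu + c_0$ reads $V^\sigma = -U^\mu + c_0$. The defining inequality $V^\sigma \leq U^\sigma$, valid everywhere as $V^\sigma \in \F^\sigma_0$, then becomes $U^\mu + U^\sigma \geq c_0$ on all of $\C$, while the support identity $V^\sigma(z) = U^\sigma(z)$ for $z \in \supp \nu$ from the theorem becomes $U^\mu(z) + U^\sigma(z) = c_0$ for every $z \in \supp \mu$. Recalling $\widetilde{Q} = c + U^\sigma$ and setting $F := c + c_0$, these two relations are precisely
\begin{align*}
  U^\mu + \widetilde{Q} &\geq F \qquad \text{everywhere on } \C, \\
  U^\mu + \widetilde{Q} &= F \qquad \text{on } \supp \mu.
\end{align*}

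Finally I would pass from $\widetilde{Q}$ to $Q$ using the defining properties of the $t$-extension. Because $\widetilde{Q} \leq Q$ holds q.e.\ on $E$, the first display gives $U^\mu + Q \geq U^\mu + \widetilde{Q} \geq F$ q.e.\ on $E$, which is \eqref{eq:req-1-for-eq-meas}. For \eqref{eq:req-2-for-eq-meas}, note that $\supp \mu \subseteq \supp \sigma_- \subseteq E'$ and that $\widetilde{Q} = Q$ holds q.e.\ on $E'$; combining this with the equality in the second display yields $U^\mu + Q = U^\mu + \widetilde{Q} = F$ q.e.\ on $\supp \mu$. Proposition~\ref{prop:char-of-eq-meas} then forces $\mu = \mu_{Q,t}$ (and $F = F_{Q,t}$), i.e.\ $\mu_{Q,t} = -\bal(\sigma, 0)$, which is exactly \eqref{eq:main-eq-meas-pb-relation}. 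There is no deep estimate remaining, as the analytic content is carried by Theorem~\ref{thm:pb-properties-with-assumptions}; the one point requiring care is the bookkeeping of the quasi-everywhere clauses. The global inequality $U^\mu + \widetilde{Q} \geq F$ and the equality on $\supp \mu$ are genuinely everywhere, and precision is lost only upon replacing $\widetilde{Q}$ by $Q$ on the polar sets furnished by the $t$-extension; one must confirm these are the sole source of exceptional sets and that they are harmless in Proposition~\ref{prop:char-of-eq-meas}.
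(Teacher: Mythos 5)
Your proof is correct, but it takes a genuinely different route from the paper's. The paper never invokes Proposition~\ref{prop:char-of-eq-meas}; instead it compares two obstacle problems: by Proposition~\ref{prop:wpt-upper-envelope} the solution for the obstacle $Q$ is $V_Q = F_{Q,t} - U^{\mu_{Q,t}}$, by Theorem~\ref{thm:pb-properties-with-assumptions} the solution for the obstacle $\widetilde{Q}$ is $V_{\widetilde{Q}} = c + V^\sigma = \widetilde{c} + U^{\bal(\sigma,0)}$, and the equality $V_Q = V_{\widetilde{Q}}$ is proved by a two-sided comparison in which the nontrivial direction $V_Q \leq V_{\widetilde{Q}}$ is reduced, via the Principle of Domination (Proposition~\ref{prop:princ-of-domination}), to checking it on $\supp \bal(\sigma,0)$; applying the Laplacian then recovers the measures. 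You instead verify the Frostman-type characterization directly for the candidate $\mu := -\bal(\sigma,0)$: Theorem~\ref{thm:pb-properties-with-assumptions} supplies all structural hypotheses (positivity, since $V^\sigma \in \F^\sigma_0$ forces $\bal(\sigma,0) \leq 0$; total mass $t$; compact support in $\supp\sigma_- \subseteq E' \subseteq E$; finite energy), the constraint $V^\sigma \leq U^\sigma$ gives the global inequality $U^\mu + \widetilde{Q} \geq F$ with $F = c + c_0$, and the contact-set identity $V^\sigma = U^\sigma$ on $\supp\bal(\sigma,0)$ gives equality on $\supp\mu$; your quasi-everywhere bookkeeping when replacing $\widetilde{Q}$ by $Q$ is sound precisely because $\supp\mu \subseteq E'$, where $\widetilde{Q} = Q$ q.e., and Proposition~\ref{prop:char-of-eq-meas} tolerates polar exceptional sets in both conditions. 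Your route is shorter, avoids Proposition~\ref{prop:wpt-upper-envelope} and any explicit domination argument (that comparison work is absorbed into the uniqueness statement of Proposition~\ref{prop:char-of-eq-meas}), and identifies the modified Robin constant $F_{Q,t} = c + c_0$ as a byproduct. What the paper's longer route buys is the intermediate statement, of independent interest and emphasized in section~\ref{sec:equil-meas-thr-pb}, that the two obstacles $Q$ and $\widetilde{Q}$ produce identical solutions $V_Q = V_{\widetilde{Q}}$, exhibiting the flexibility of the obstacle-problem formulation.
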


\begin{proof}
  As mentioned in Remark~\ref{rem:E-subset-complex-plane}, we will,
  without loss of generality, consider $Q$ to be defined on the entire
  complex plane, \ie $E = \C$. Under the given assumptions on
  $\sigma$, it follows from
  Theorem~\ref{thm:pb-properties-with-assumptions} that $\bal(\sigma,
  0)$ exists. Let, as before, $\S$ be the set of subharmonic functions
  in $\C$, and define $\S_t$ to be the set of all functions $V \in \S$
  that are harmonic in a neighborhood of infinity and such that $V(z)
  - t \log |z|$ is bounded from above as $|z| \to \infty$, \ie $V(z)
  \leq t \log |z| + \O(1)$ for large $|z|$. We now consider the two
  obstacle problems
  \begin{gather*}
    V_{Q} := \sup \{ V \in \S_t : V(z) \leq Q(z) \text{ for q.e. } z
    \in E = \C\}
  \end{gather*}
  and
  \begin{align*}
    V_{\widetilde{Q}} :=& \sup \{ V \in \S_t : V(z) \leq \widetilde{Q}(z)
    \text{ for q.e. } z \in \C\} \\
    =& \sup \{ V \in \S : V(z) \leq c + U^{\sigma}(z) \text{ for all }
    z \in \C\}.
  \end{align*}
  The last equality above is motivated in part by that quasi
  everywhere implies everywhere for this particular obstacle: the
  inequality is equivalent with $h_1(z) + h_2(z) \geq 0$ q.e.\ for
  $h_1(z) = U^{\sigma_+}(z) - V(z)$, $h_2(z) = c - U^{\sigma_-}(z)$,
  and under the given assumptions we get that $h_1$ is superharmonic
  and $h_2$ is continuous everywhere, so a standard potential
  theoretic argument using mollifiers can be used. With this property
  established, it is evident that the solution $V_{\widetilde{Q}}$ to
  the second obstacle problem in fact must be precisely
  $V_{\widetilde{Q}} = V^\sigma + c$, where $V^\sigma \equiv
  V^\sigma_0 = \sup \F^\sigma_0$ is as in the definition of
  $\bal(\sigma, 0)$, since we from
  Theorem~\ref{thm:pb-properties-with-assumptions} can conclude that
  we must have $V^\sigma + c \in \S_t$. By the same theorem we obtain
  $V^\sigma = c_0 + U^{\bal(\sigma, 0)}$ for some $c_0$, and hence, if
  we for sake of simplicity let $\nu$ be the positive measure $\mu :=
  - \bal(\sigma, 0)$, we can conclude that
  \begin{gather}
    \label{eq:V_tilde-Q-as-potential}
    V_{\widetilde{Q}} = \widetilde{c} - U^\mu
  \end{gather}
  for some constant $\widetilde{c} = c + c_0$.  As for the solution to
  the first obstacle problem, we simply apply
  Proposition~\ref{prop:wpt-upper-envelope} to immediately obtain that
  we must have
  \begin{gather}
    \label{eq:V_Q-as-potential}
    V_Q = F_{Q,t} - U^{\mu_{Q,t}}.
  \end{gather}

  We now claim that $V_Q = V_{\widetilde{Q}}$. In other words, as
  earlier mentioned, from an obstacle problem point of view one could
  replace the obstacle $Q$, used in determining the potential of the
  weighted equilibrium measure, with the $t$-extension $\widetilde{Q}$
  and still get the same solution. Since the solution to the obstacle
  problem for the obstacle $\widetilde{Q}$ evidently can be expressed
  in terms of the partial balayage measure $\bal(\sigma, 0)$, this
  will establish the proposed complementarity relationship
  \eqref{eq:main-eq-meas-pb-relation}.

  As for proving the equality between $V_Q$ and $V_{\widetilde{Q}}$,
  we show that either one must be less than or equal to the
  other. Since we by definition of our $t$-extension $\widetilde{Q}$
  have $\widetilde{Q} \leq Q$ quasi everywhere, it is clear that
  $V_{\widetilde{Q}} \leq V_Q$ holds, and so the main difficulty is
  showing the we also have the reverse inequality, \ie that $V_Q \leq
  V_{\widetilde{Q}}$. Utilizing \eqref{eq:V_tilde-Q-as-potential} and
  \eqref{eq:V_Q-as-potential} it is evident that showing $V_Q \leq
  V_{\widetilde{Q}}$ is equivalent with showing that
  \begin{gather*}
    U^\mu \leq U^{\mu_{Q,t}} + (\widetilde{c} - F_{Q,t})
  \end{gather*}
  holds everywhere. Using the Principle of Domination,
  Proposition~\ref{prop:princ-of-domination}, we get that it in fact
  is enough to show that $V_Q \leq V_{\widetilde{Q}}$ holds on the
  support of $\mu = -\bal(\sigma, 0)$. However, this is easy: from
  Theorem~\ref{thm:pb-properties-with-assumptions} it follows that
  $V^\sigma = U^\sigma$ on $\supp \bal(\sigma, 0)$, hence
  \begin{gather*}
    V_{\widetilde{Q}} = c + V^\sigma = c + U^\sigma = \widetilde{Q}
  \end{gather*}
  holds there. But from the same theorem and one of our starting
  assumptions we moreover also know that $\supp \bal(\sigma, 0)
  \subseteq \supp \sigma_- \subseteq E'$. Since the $t$-extension
  $\widetilde{Q}$ is defined to satisfy $\widetilde{Q} = Q$ q.e. on
  the set $E'$, it now follows that $V_{\widetilde{Q}} = Q$ holds on
  $\supp \bal(\sigma, 0)$. From this it is evident that $V_Q \leq
  V_{\widetilde{Q}}$ holds on $\supp \bal(\sigma, 0)$, and we can
  therefore finally conclude that we indeed have $V_Q \leq
  V_{\widetilde{Q}}$ everywhere, and hence
  \begin{gather*}
    F_{Q,t} - U^{\mu_{Q,t}} = V_Q = V_{\widetilde{Q}} = \widetilde{c} +
    U^{\bal(\sigma, 0)}
  \end{gather*}
  holds everywhere. Applying the Laplace operator on the leftmost and
  rightmost sides of this last equality to recover the measures yields
  \eqref{eq:main-eq-meas-pb-relation}.
\end{proof}

There are a few corollaries to Theorem~\ref{thm:wpt-pb-rel} that we
should mention. One of the main difficulties in determining the
weighted equilibrium measure $\mu_{Q,t}$ is to locate its support, and
for this reason the following result may be interesting:

\begin{corollary}
  Let $Q$ and $\mathbf{G} = (E', \sigma, c)$ be as in
  Theorem~\ref{thm:wpt-pb-rel}, with $\supp \sigma_- \subseteq E'$ and
  $U^{\sigma_-}$ assumed continuous on $\C$. Then $\supp \mu_{Q,t}
  \subseteq E'$.
\end{corollary}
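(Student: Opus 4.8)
The plan is to derive this almost immediately by chaining together the two main theorems already established. First I would invoke Theorem~\ref{thm:wpt-pb-rel}: since the hypotheses here ($\mathbf{G} = (E', \sigma, c)$ defines a $t$-extension of $Q$ relative to $E'$, and $U^{\sigma_-}$ is continuous on $\C$) are exactly those of that theorem, the complementarity relation
\begin{gather*}
  \mu_{Q,t} + \bal(\sigma, 0) = 0
\end{gather*}
holds, so that $\mu_{Q,t} = -\bal(\sigma, 0)$.

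Next I would bound the support of the partial balayage measure. Because the continuity of $U^{\sigma_-}$ together with $\sigma(\C) = -t < 0$ places us under the assumptions of Theorem~\ref{thm:pb-properties-with-assumptions}, that theorem gives $\supp \bal(\sigma, 0) \subseteq \supp \sigma_-$. Since a (signed) measure and its negative have the same support, we have $\supp \mu_{Q,t} = \supp \bal(\sigma, 0)$, and therefore
\begin{gather*}
  \supp \mu_{Q,t} = \supp \bal(\sigma, 0) \subseteq \supp \sigma_- \subseteq E',
\end{gather*}
where the last inclusion is exactly the standing hypothesis $\supp \sigma_- \subseteq E'$. This completes the argument.

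There is essentially no genuine obstacle here: the statement is a direct corollary, and the only substantive point is simply recognizing that the hypotheses of the corollary are precisely strong enough to simultaneously trigger both Theorem~\ref{thm:wpt-pb-rel} (yielding the identity $\mu_{Q,t} = -\bal(\sigma, 0)$) and Theorem~\ref{thm:pb-properties-with-assumptions} (yielding the support containment for $\bal(\sigma, 0)$). The only thing worth stating explicitly, to avoid any appearance of hand-waving, is the elementary fact that passing from $\bal(\sigma,0)$ to its negative $\mu_{Q,t}$ does not change the support.
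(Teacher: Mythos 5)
Your proof is correct and follows exactly the same route as the paper's own argument: apply Theorem~\ref{thm:wpt-pb-rel} to get $\mu_{Q,t} = -\bal(\sigma,0)$, then use the support containment \eqref{eq:pb-supp-in-sigma-neg} from Theorem~\ref{thm:pb-properties-with-assumptions} together with the hypothesis $\supp \sigma_- \subseteq E'$. Your additional remarks (checking that the hypotheses trigger both theorems, and that negation preserves support) are just explicit spellings-out of what the paper leaves implicit.
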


\begin{proof}
  This follows immediately from applying the theorem, along with
  \begin{gather*}
    \supp \mu_{Q,t} = \supp \bal(\sigma, 0) \subseteq \supp \sigma_-
    \subseteq E'.
  \end{gather*}
\end{proof}

\noindent Also, if the set $E$ on which $Q$ is assumed to be defined
is already a compact set, then Theorem~\ref{thm:wpt-pb-rel} can be
stated in a slightly simplified way:

\begin{corollary}
  \label{cor:wpt-pb-rel-compact}
  Let $E \subset \C$ be a compact set, and let $Q$ be an admissible
  background potential on $E$. Assume there exists a signed Radon
  measure $\sigma = \sigma_+ - \sigma_-$ with compact support such
  that $Q = \restr{U^{\sigma}}{E} + c$ for some constant $c \in \R$
  and such that $\sigma$ satisfies $t := - \sigma(\C) > 0$, $\supp
  \sigma_- \subseteq E$ and $U^{\sigma_-}$ is continuous on $\C$. Then
  $\mu_{Q,t} + \bal(\sigma, 0) = 0$.
\end{corollary}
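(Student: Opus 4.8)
The plan is to recognize this corollary as a direct specialization of Theorem~\ref{thm:wpt-pb-rel}, with essentially all of the work consisting in repackaging the hypotheses into the language of a $t$-extension. First I would set $t := -\sigma(\C) > 0$, take $E' := E$, and consider the triple $\mathbf{G} = (E, \sigma, c)$. Since $E$ is compact, $E' = E$ is indeed a compact subset of $E$; the measure $\sigma$ is compactly supported with $\sigma(\C) = -t$ by construction; and $\supp \sigma_- \subseteq E = E'$ holds by assumption.

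Next I would verify that $\mathbf{G}$ defines a $t$-extension of $Q$ relative to $E'$. The crucial observation is that the hypothesis $Q = \restr{U^\sigma}{E} + c$ says precisely that the candidate function $\widetilde{Q} := c + U^\sigma$ agrees with $Q$ at \emph{every} point of $E = E'$. Consequently both defining conditions of a $t$-extension hold trivially: the equality $\widetilde{Q}(z) = Q(z)$ is valid for all (hence quasi-every) $z \in E'$, and the inequality $\widetilde{Q}(z) \leq Q(z)$ likewise holds with equality for all (hence quasi-every) $z \in E$.

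It then remains to check that $Q$ is $t$-admissible for this particular value of $t$. Because $E$ is compact, admissibility requirement (iii) in Definition~\ref{def:t-adm-bkg-pot} is vacuous, so the hypothesis that $Q$ is an admissible background potential---that is, that $Q$ satisfies (i) and (ii)---automatically renders $Q$ $t$-admissible for every $t > 0$, and in particular for our $t = -\sigma(\C)$.

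With $U^{\sigma_-}$ continuous on $\C$ by assumption, every hypothesis of Theorem~\ref{thm:wpt-pb-rel} is now in force for the pair $(Q, \mathbf{G})$, and invoking that theorem immediately yields $\mu_{Q,t} + \bal(\sigma,0) = 0$. No substantive obstacle arises in this argument; the entire content lies in the routine bookkeeping that confirms the triple $\mathbf{G} = (E, \sigma, c)$ satisfies the $t$-extension conditions, after which the corollary is an immediate consequence of the main theorem.
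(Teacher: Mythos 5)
Your proof is correct and is exactly the paper's argument: the paper's entire proof reads ``Simply apply Theorem~\ref{thm:wpt-pb-rel} using $\mathbf{G} = (E, \sigma, c)$.'' You have merely spelled out the routine verifications (that the hypothesis $Q = \restr{U^{\sigma}}{E} + c$ makes both $t$-extension conditions hold with equality, and that compactness of $E$ makes $t$-admissibility automatic) that the paper leaves implicit.
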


\begin{proof}
  Simply apply Theorem~\ref{thm:wpt-pb-rel} using $\mathbf{G} = (E,
  \sigma, c)$.
\end{proof}

\section{Examples: Part II}
\label{sec:examples-part-ii}

Let us now once more turn our focus to the sample potential 
\begin{gather*}
  Q(z) = \alpha |z|^2 + \beta \log \frac{1}{|z-a|},
\end{gather*}
with $\alpha, \beta > 0$ and $a \in \C$ arbitrary, and attempt to find
the equilibrium measure $\mu_{Q,t}$ using the complementarity
relationship to partial balayage given in
Theorem~\ref{thm:wpt-pb-rel}, \ie we essentially want to verify the
results in section~\ref{sec:examples-part-i} using partial
balayage. In fact, the precise technique we will use will work for any
so-called Gaussian background potential with a superharmonic
perturbation (as defined in \cite{BaloghHarnad:09}): we will in this
section from hereon hence assume that our background potential $Q$ has
the form
\begin{gather}
  \label{eq:ex-pert-gauss-bkgpot}
  Q(z) = \alpha |z|^2 + U^\nu(z),
\end{gather}
where $\alpha > 0$ and $\nu$ is a finite positive Borel measure with
compact support (note that our sample potential corresponds to $\nu =
\beta \delta_a$ with $\beta > 0$ and $a \in \C$). Throughout this
section we will moreover assume that
$t > 0$ is an arbitrary but fixed positive real number.\\

\noindent The method we will use will of course be that of finding a
suitable $t$-extension $\widetilde{Q}$ of the background potential
$Q$, and apply the theorem given in the previous section. Finding such
a $t$-extension is not a trivial task in general, but for $Q$ of the
form \eqref{eq:ex-pert-gauss-bkgpot} it turns out that the following
is suitable: let $\rho > 0$ be a constant, soon to be determined, let
$\widetilde{Q} = c + U^\sigma$ denote our desired $t$-extension
relative to some compact set $E'$ (\ie $\mathbf{G} := (E', \sigma,
c)$), and in particular take $E' := \overline{\disk{0}{\rho}}$; we
thus need to find a signed Radon measure $\sigma$ of compact support
and a constant $c$ such that the function $\widetilde{Q} = c +
U^\sigma$ satisfies $\widetilde{Q}(z) = Q(z)$ if $|z| \leq \rho$ and
$\widetilde{Q}(z) \leq Q(z)$ if $|z| > \rho$ (see
Figure~\ref{fig:ex-obstacle-problem} for an illustration of the case
where $\nu = \beta \delta_a$). The way we will do this is to simply
try to construct $\widetilde{Q}$ so that it gets the desired behavior
on $\C$, and then simply let $\sigma$ be defined by
\begin{gather*}
  \sigma := - \frac{1}{2\pi} \Delta \widetilde{Q};
\end{gather*}
it will soon become clear that there then exists some constant $c$
such that we indeed have $\widetilde{Q} = c + U^\sigma$.
\begin{center}
  \begin{figure} \centering
    \begin{minipage}[t]{1.0\textwidth} \centering
      \includegraphics[scale=.555]{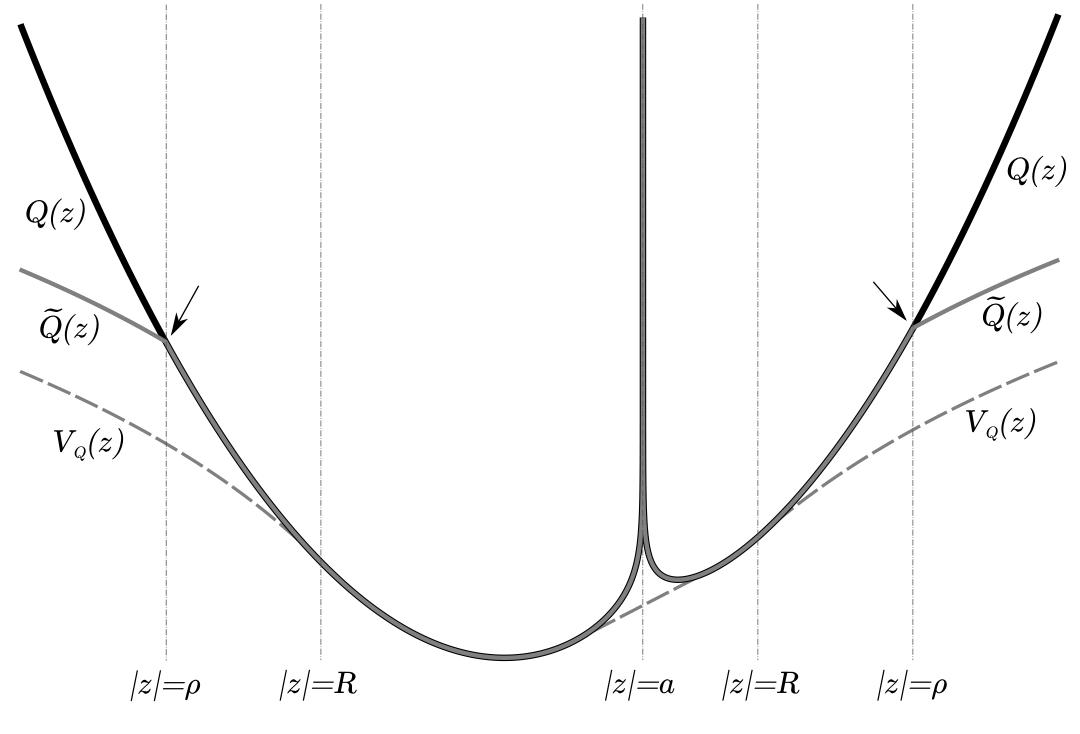}
    \end{minipage}
    \caption{A plot through the line containing the origin and the
      point $a$ of $Q(z) = \alpha |z|^2 + \beta \log \frac{1}{|z-a|}$,
      the $t$-extension $\widetilde{Q}$, equal to $Q$ on
      $E'=\overline{\disk{0}{\rho}}$ and $\widetilde{Q}(z) \sim t \log
      |z| + \O(1)$ near infinity, as well as the solution $V_Q$
      ($=V_{\widetilde{Q}}$) to the obstacle problem with obstacle
      $Q$; as can be seen in the figure $\rho$ is assumed to satisfy
      $\rho > R = \sqrt{\frac{t + \beta}{2\alpha}}$. Note the
      discontinuity arising in the radial derivative of
      $\widetilde{Q}$ illustrated by the arrows at $|z|=\rho$; the
      obtained $t$-extension $\widetilde{Q}$ is $C^1$ at $|z|=\rho$ if
      and only if $\rho=R$.}
    \label{fig:ex-obstacle-problem}
  \end{figure}
\end{center}
Since our background potential $Q$ is the sum of a Gaussian term
$\alpha |z|^2$ with another term that is already a potential, let us
attempt to find our $t$-extension $\widetilde{Q}$ by simply modifying
the part of $Q$ that is not already the potential of some measure, \ie
we will focus on the term $\alpha |z|^2$. For the relation
$\widetilde{Q}=c+U^\sigma$ to hold we are going to require that
\begin{align}
  \widetilde{Q}(z) &= \sigma(\C) \log \frac{1}{|z|} + \O(1) 
= (t + \nu(\C)) \log |z| + \nu(\C) \log \frac{1}{|z|} +
  \O(1) \label{eq:Q-tilde-form-near-infty} 
\end{align}
holds near infinity. As the second term on the rightmost side
corresponds to the behavior of $U^\nu$ near infinity, let us simply
base our ansatz for $\widetilde{Q}$ directly on
\eqref{eq:Q-tilde-form-near-infty}, and therefore see if we can find a
constant ${c} \in \R$ such that
\begin{gather}
  \label{eq:Q-tilde-def-outside}
  \widetilde{Q}(z) = (t + \nu(\C)) \log |z| + U^\nu(z) + {c}
\end{gather}
holds for all $|z| > \rho$, with $\widetilde{Q}$ still satisfying the
required properties discussed above. The value of ${c}$ is easily
determined: we obviously need that the resulting function
$\widetilde{Q}$ is a.e.\ continuous at the points on the set $\{z : |z|
= \rho\}$, and to achieve this while demanding that $\widetilde{Q}(z)
= Q(z)$ for $|z| < \rho$ and $\widetilde{Q}$ given by
\eqref{eq:Q-tilde-def-outside} for $|z| > \rho$, it is clear that we
need
\begin{gather*}
  {c} = \alpha \rho^2 - (t + \nu(\C)) \log \rho.
\end{gather*}
As for the requirement that $\widetilde{Q} \leq Q$ must hold
everywhere, we see that this is clearly the case if we have
\begin{gather}
  \label{eq:sample-pot-req-ineq}
  (t + \nu(\C)) \log |z| + {c} \leq \alpha |z|^2
\end{gather}
for all $|z| \geq \rho$. For sake of obtaining this property, let $f :
[\rho, \infty) \to \R$ be defined by
\begin{gather*}
  f(x) := \alpha x^2 - (t + \nu(\C)) \log x - {c};
\end{gather*}
the inequality \eqref{eq:sample-pot-req-ineq} is evidently equivalent
with $f$ being a non-negative function on $[\rho, \infty)$. On the one
hand it is clear from how we defined ${c}$ that $f(\rho) =
0$. Moreover, we have
\begin{gather*}
  f'(x) = 2 \alpha x - \frac{t + \nu(\C)}{x},
\end{gather*}
and so non-negativity of $f$ follows if we have $f'(x) \geq 0$ for all
$x \geq \rho$. It is evident that $f'$ is monotonically increasing, so
we only need to demand that $f'(\rho) \geq 0$; hence we require that
\begin{gather}
  \label{eq:sample-class-defining-radius-for-positivity}
  2\alpha \rho - \frac{t + \nu(\C)}{\rho} \geq 0 \iff \rho \geq
  \sqrt{\frac{t + \nu(\C)}{2\alpha}} =: R.
\end{gather}
As earlier suggested we now define $\sigma$ through the relation $-
\Delta \widetilde{Q} = 2 \pi \sigma$. An easy calculation, taking into
account the possible measure arising on the set $\{z : |z| = \rho\}$
due to the first derivative of $\widetilde{Q}$ in the radial direction
possibly being discontinuous there, yields that
\begin{gather*}
  \sigma = - \frac{1}{2\pi} \Delta \widetilde{Q} = - \frac{2\alpha}{\pi}
  \restrx{m}{\disk{0}{\rho}} + \nu + \frac{1}{2\pi} \left( 2\alpha
    \rho - \frac{t + \nu(\C)}{\rho} \right) s,
\end{gather*}
where $s$ is the arc length measure on $\{z : |z| = \rho\}$ of total
mass $2 \pi \rho$. Using the standard results from classical potential
theory that the potential $U^{\disk{0}{\rho}}$ of the Lebesgue measure
restricted to the disk $\disk{0}{\rho}$ is given by
\begin{gather}
  \label{eq:log-pot-of-disk-at-origin}
  U^{\disk{0}{\rho}}(z) = \left\{
    \begin{array}{l l}
      \displaystyle - \frac{\pi}{2} |z|^2 + \frac{\pi \rho^2}{2} \left(
        \log \frac{1}{\rho^2} + 1
      \right) & |z| \leq \rho, \\[.2cm]
      \displaystyle \pi \rho^2 \log \frac{1}{|z|}& |z| > \rho,
    \end{array}
  \right.
\end{gather}
and that the potential $U^s$ of the arc length measure $s$ on $\{z :
|z| = \rho\}$ is given by
\begin{gather*}
  U^s(z) = \left\{
    \begin{array}{l l}
      \displaystyle 2 \pi \rho \log \frac{1}{\rho} & |z| \leq \rho, \\[.2cm]
      \displaystyle 2 \pi \rho \log \frac{1}{|z|} & |z| > \rho,
    \end{array}
  \right.
\end{gather*}
one easily sees that for $|z| \leq \rho$ we obtain
\begin{align*}
  U^\sigma(z) &= - \frac{2\alpha}{\pi} U^{\disk{0}{\rho}} + U^\nu(z) +
  \frac{1}{2\pi} \left( 2\alpha \rho - \frac{t + \nu(\C)}{\rho}
  \right) U^s(z) \\
  &= \alpha |z|^2 + \alpha \rho^2 \log \rho^2 - \alpha \rho^2 +
  U^\nu(z) - 2 \alpha \rho^2 \log \rho + (t + \nu(\C)) \log \rho \\
  &= \alpha |z|^2 + U^\nu(z) - (\alpha \rho^2 - (t + \nu(\C)) \log
  \rho) \\ &= Q(z) - {c}.
\end{align*}
For $|z| > \rho$ we in a similar way instead get
\begin{align*}
  U^\sigma(z) &= 2 \alpha \rho^2 \log |z| + U^\nu(z) - 2 \alpha \rho^2
  \log |z| + (t + \nu(\C)) \log |z| \\
  &= (t + \nu(\C)) \log |z| + U^\nu(z).
\end{align*}
We can summarize these results as
\begin{gather*}
  U^\sigma(z) + {c} = \left\{
    \begin{array}{l l}
      Q(z) & |z| \leq \rho, \\[.2cm]
      (t + \nu(\C)) \log |z| + U^\nu(z) + {c} & |z| > \rho,
    \end{array}
  \right.
\end{gather*}
which shows that we indeed obtain $\widetilde{Q} = c + U^\sigma$
everywhere for our choice of $\widetilde{Q}$. Since we have already
seen that this $\widetilde{Q}$ satisfies $\widetilde{Q} \leq Q$
everywhere, it follows that $\mathbf{G} := (E', \sigma, c)$ defines a
$t$-extension of $Q$ if we have $\sigma(\C) = -t$; this property of
course follows immediately from that we originally \emph{defined}
$\sigma$ so that the potential of it should behave like $t \log |z| +
\O(1)$ near infinity, but we can just as well also calculate it
explicitly:
\begin{align*}
  \sigma(\C) &= -\frac{2\alpha}{\pi} \cdot \pi \rho^2 + \nu(\C) +
  \frac{1}{2\pi} \left( 2\alpha \rho - \frac{t + \nu(\C)}{\rho}
  \right) \cdot 2 \pi \rho \\
  &= - 2 \alpha \rho^2 + \nu(\C) + 2 \alpha \rho^2 - (t + \nu(\C)) =
  -t. 
\end{align*}
Finally, from \eqref{eq:sample-class-defining-radius-for-positivity}
we see that we have $\sigma = \sigma_+ - \sigma_-$ with
\begin{align*}
  \sigma_+ &= \nu + \frac{1}{2\pi} \left(
    2 \alpha \rho - \frac{t + \nu(\C)}{\rho}
  \right) s, \\
  \sigma_- &= \frac{2\alpha}{\pi} \restrx{m}{\disk{0}{\rho}}.
\end{align*}
Since it then is evident that $\supp \sigma_- =
\overline{\disk{0}{\rho}} = E'$ and that the potential $U^{\sigma_-}$
is continuous on the complex plane
(cf. \eqref{eq:log-pot-of-disk-at-origin}), it finally follows, by an
application of Theorem~\ref{thm:wpt-pb-rel}, that we indeed have
$\mu_{Q,t} + \bal(\sigma, 0) = 0$. The support of $\mu_{Q,t}$ must
especially hence be contained in $\disk{0}{\rho}$ for every choice of
$\rho$ satisfying
\begin{gather*}
  \rho \geq \sqrt{\frac{t + \nu(\C)}{2\alpha}},
\end{gather*}
so by taking the smallest such $\rho$ we can conclude the following
proposition, which in a similar form was conjectured in
\cite{BaloghHarnad:09} but not proven there:
\begin{proposition}
  Let $Q(z) = \alpha|z|^2 + U^\nu(z)$ be a Gaussian background
  potential with a superharmonic perturbation, \ie we assume that
  $\alpha > 0$ and $\nu$ is a finite positive Borel measure with
  compact support. Then
  \begin{gather*}
    \supp \mu_{Q,t} \subseteq \overline{\disk{0}{\sqrt{\frac{t +
            \nu(\C)}{2\alpha}}}}. 
  \end{gather*}
\end{proposition}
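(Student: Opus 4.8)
The plan is to leverage the one-parameter family of $t$-extensions constructed immediately above, in which the radius $\rho$ of $E' = \overline{\disk{0}{\rho}}$ is a free parameter constrained only by $\rho \geq R$, where $R := \sqrt{\frac{t + \nu(\C)}{2\alpha}}$. For each such $\rho$ the construction produces a signed Radon measure $\sigma = \sigma_+ - \sigma_-$ with $\sigma(\C) = -t$ and with $\sigma_- = \frac{2\alpha}{\pi}\restrx{m}{\disk{0}{\rho}}$, so that $\supp \sigma_- = \overline{\disk{0}{\rho}} = E'$ and $U^{\sigma_-}$ is continuous on $\C$; moreover $\mathbf{G} = (E', \sigma, c)$ defines a $t$-extension of $Q$.

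With this in hand, I would first apply Theorem~\ref{thm:wpt-pb-rel} to conclude $\mu_{Q,t} = -\bal(\sigma, 0)$, and then invoke the support inclusion \eqref{eq:pb-supp-in-sigma-neg} of Theorem~\ref{thm:pb-properties-with-assumptions} to obtain $\supp \mu_{Q,t} = \supp \bal(\sigma, 0) \subseteq \supp \sigma_- = \overline{\disk{0}{\rho}}$. This yields the containment for every admissible $\rho$. To reach the stated radius I would simply take $\rho = R$: the positivity requirement \eqref{eq:sample-class-defining-radius-for-positivity} is exactly $f'(\rho) \geq 0$, and since $f'$ is monotonically increasing with $f'(R) = 0$, the auxiliary function $f$ remains non-negative on $[R, \infty)$ even when $\rho = R$, so the whole construction still applies (the arc-length term in $\sigma_+$ merely drops out, leaving $\sigma_+ = \nu$ and $\sigma_- = \frac{2\alpha}{\pi}\restrx{m}{\disk{0}{R}}$). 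Hence $\supp \mu_{Q,t} \subseteq \overline{\disk{0}{R}}$, which is the assertion. Alternatively, one may avoid checking admissibility of the endpoint $\rho = R$ and instead intersect the family of containments, using that $\bigcap_{\rho > R} \overline{\disk{0}{\rho}} = \overline{\disk{0}{R}}$.

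I do not expect any genuine obstacle at this stage, because the substantive work has already been carried out in the construction above: the only delicate hypotheses, namely that $U^{\sigma_-}$ be continuous and that $\widetilde{Q} \leq Q$ hold globally, were both reduced to the single threshold inequality for $f$. It is precisely the stationarity condition $f'(R) = 0$ that singles out $R$ as the smallest admissible radius, and this is what makes the resulting bound the sharpest obtainable from this family of extensions.
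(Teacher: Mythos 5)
Your proposal is correct and follows essentially the same route as the paper: the paper likewise invokes Theorem~\ref{thm:wpt-pb-rel} for the one-parameter family of $t$-extensions with $\rho \geq \sqrt{(t+\nu(\C))/(2\alpha)}$, uses $\supp \mu_{Q,t} = \supp \bal(\sigma,0) \subseteq \supp \sigma_- = \overline{\disk{0}{\rho}}$, and then takes the smallest admissible radius $\rho = R$, at which point (as you note) the arc-length term in $\sigma_+$ vanishes. Your endpoint check via $f'(R)=0$ and the alternative intersection argument over $\rho > R$ are only minor elaborations of the same idea.
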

\noindent As a final remark we note that
\begin{gather*}
  \rho = \sqrt{\frac{t + \nu(\C)}{2\alpha}} \implies 2\alpha \rho -
  \frac{t + \nu(\C)}{\rho} = 0,
\end{gather*}
\ie the smallest possible value for $\rho$ that still makes
$\mathbf{G} = (E', \sigma, c)$ a $t$-extension is precisely the one
that makes the arc measure supported on $\{z : |z| = \rho\}$ in
$\sigma$ vanish; this is precisely the radius required to make the
radial derivative of the obtained $t$-extension $\widetilde{Q}$
continuous in a neighborhood of $\{|z|=\rho\}$
(cf. Figure~\ref{fig:ex-obstacle-problem}).

\bibliographystyle{abbrv}
\bibliography{refs}

\vspace{.2cm}
{\small \noindent 
Joakim Roos\\
Department of Mathematics\\ 
KTH\\
SE-100 44 Stockholm\\
Sweden\\
e-mail: {\ttfamily joakimrs@math.kth.se}}

\end{document}